\tikzstyle{vertex}=[circle, draw, inner sep=0pt, minimum size=4pt]
\newcommand{\vertex}{\node[vertex]}
\definecolor{applegreen}{rgb}{0.55,0.71,0.0}
\definecolor{darkblue}{rgb}{0.0, 0.0, 0.7}
\newtheorem{theorem}{Theorem}[section]
\newtheorem{lemma}[theorem]{Lemma}
\newtheorem{proposition}[theorem]{Proposition}
\newtheorem{corollary}[theorem]{Corollary}
\theoremstyle{definition}
\newtheorem{definition}[theorem]{Definition}
\newtheorem{example}[theorem]{Example}
\newtheorem{remark}[theorem]{Remark}
\newcommand\ba{\mathbf{a}}
\newcommand\be{\mathbf{e}}
\newcommand\bbA{\mathbb{A}}
\newcommand\bbR{\mathbb{R}}
\newcommand\bbZ{\mathbb{Z}}
\newcommand\Z{\mathbb{Z}}
\newcommand\calC{\mathcal{C}}
\newcommand\calD{\mathcal{D}}
\newcommand\calF{\mathcal{F}}
\newcommand\calP{\mathcal{P}}
\newcommand\calQ{\mathcal{Q}}
\newcommand\calR{\mathcal{R}}
\newcommand\calS{\mathcal{S}}
\newcommand\calU{\mathcal{U}}
\newcommand\inedge{\mathrm{in}}
\newcommand\outedge{\mathrm{out}}
\renewcommand\IJ{I,\overline{J}}
\newcommand\hatG{\widehat{G}}
\newcommand\AIJ{A(I,\overline{J})}
\newcommand\GIJ{G(I,\overline{J})}
\newcommand\UIJ{\calU_{I,\overline{J}}}
\newcommand\precj{\mathrm{prec}(\overline{j})}
\newcommand\precJ{\mathrm{prec}(\overline{J})}
\DeclareMathOperator{\Cat}{Cat}
\DeclareMathOperator{\Nar}{Nar}
\title{On the subdivision algebra for the polytope $\UIJ$}
\author[von Bell]{Matias von Bell}
\address[von Bell]{Department of Mathematics\\
         University of Kentucky\\
}
\email{matias.vonbell@uky.edu}
\author[Yip]{Martha Yip}
\address[Yip]{Department of Mathematics\\
         University of Kentucky\\
}
\email{martha.yip@uky.edu}
\date{\today}
\thanks{Martha Yip is partially supported by Simons Collaboration Grant 429920.}
\begin{document}
\parskip=3pt

\begin{abstract}
The polytopes $\UIJ$ were introduced by Ceballos, Padrol, and Sarmiento to provide a geometric approach to the study of $(\IJ)$-Tamari lattices.
They observed a connection between certain $\UIJ$ and acyclic root polytopes, and wondered if M\'esz\'aros' subdivision algebra can be used to subdivide all $\UIJ$. 
We answer this in the affirmative from two perspectives, one using flow polytopes and the other using root polytopes. 
We show that $\UIJ$ is integrally equivalent to a flow polytope that can be subdivided using the subdivision algebra. 
Alternatively, we find a suitable projection of $\UIJ$ to an acyclic root polytope which allows subdivisions of the root polytope to be lifted back to $\calU_{\IJ}$.
As a consequence, this implies that subdivisions of $\UIJ$ can be obtained with the algebraic interpretation of using reduced forms of monomials in the subdivision algebra.
In addition, we show that the $(\IJ)$-Tamari complex can be obtained as a triangulated flow polytope.
\end{abstract}
\maketitle 

\section{Introduction} \label{sec.introduction}

The {\em Tamari lattice} is a certain partial order on the set of Dyck paths and has been extensively studied since its introduction by Tamari~\cite{Tamari}. 
Its Hasse diagram is well-known to be the $1$-skeleton of the {\em associahedron}, the secondary polytope of a convex polygon.
Pr\'eville-Ratelle and Viennot generalized this to the notion of the $\nu$-Tamari lattice, which is a poset on the set of lattice paths that lie weakly above a given lattice path $\nu$ from $(0,0)$ to $(a,b)$.
The classical Tamari lattice is the special case when $\nu$ is the staircase path from $(0,0)$ to $(n,n)$.

Motivated by the desire to provide a geometric approach to the study of $\nu$-Tamari lattices, Ceballos, Padrol and Sarmiento~\cite{CPS19} introduced the polytopes 
$$\calU_n = \mathrm{conv}\left\{ (\be_i, \be_{\overline{j}}) \mid i\in [n], \overline{j}\in[\overline{n}], i \prec \overline{j} \right\}\subseteq \bbR^{2n}$$
where $[n]=\{1,\ldots,n\} \footnote{In \cite{CPS19} the notation $[n]$ was used for convenience to denote $\{0,1,...,n\}$. In our setting, however, it is more convenient to use the more common definition in combinatorics excluding $0$.}$, $[\overline{n}]=\{\overline{1},\ldots,\overline{n}\}$, and $1\prec\overline{1}\prec \cdots \prec n \prec \overline{n}$.
This is a subpolytope of a product of simplices.
The faces of $\calU_n$ are described by 
$$\UIJ = \mathrm{conv}\left\{ (\be_i, \be_{\overline{j}}) \mid i\in I, \overline{j}\in\overline{J}, i \prec \overline{j} \right\}$$
for $I\subseteq[n]$ and $\overline{J}\subseteq[\overline{n}]$.
It was shown in~\cite{CPS19} that the polytope $\UIJ$ possesses a regular triangulation $\mathscr{A}_{\IJ}$ whose dual graph is the Hasse diagram of the $(\IJ)$-Tamari lattice.  For this reason, $\mathscr{A}_{\IJ}$ is called the {\em $(\IJ)$-associahedral triangulation}.

Ceballos et al. observed that polytopes which possess an associahedral triangulation have appeared in many different guises.
Of particular interest to us is the work of Gelfand, Graev and Postnikov~\cite{GGP}, who showed that the type $A_{n-1}$ full root polytope
$$\calP_{A_{n-1}} = \mathrm{conv}\left\{\mathbf{0}, \be_i-\be_j\mid 1\leq i < j\leq n\right\} $$
has an associahedral triangulation.
More generally, a type $A_{n-1}$ root polytope is the convex hull of $\mathbf{0}$ and a subset of the positive roots $\Phi_{A_{n-1}}^+ = \{\be_i-\be_j\mid 1\leq i < j\leq n\}$.
M\'esz\'aros~\cite{Mes11} studied a subclass of root polytopes $\calP(G)$ associated to a graph $G$ defined in the following way.
Let $G=(V,E)$ be an acyclic simple graph on the vertex set $V=[n]$ such that every edge is directed from the smaller vertex to the larger one.  The edge $e=(i,j)$ with $i<j$ is identified with the positive root $\alpha_e=\be_i-\be_j$, and the cone associated to $G$ is $\mathrm{cone}(G) = \{ \sum_{e\in E(G)} c_e \alpha_e \mid c_e\in \bbR_{\geq0}\}$.
Then the type $A_{n-1}$ root polytope associated to the acyclic graph $G$ is
$$\calP(G) = \calP_{A_{n-1}} \cap \mathrm{cone}(G) \subseteq \bbR^n. $$
This defines the class of {\em acyclic root polytopes}.
The special case of the full root polytope $\calP_{A_{n-1}}$ is the case when $G$ is the path graph on $n$ vertices. 
It was shown in ~\cite{Mes11} that acyclic root polytopes can be subdivided via reductions of a monomial $M_G$ in the {\em subdivision algebra} (see Section~\ref{sec.subdivisionalgebra}).

Under the linear map $\bbR^{2n}\rightarrow \bbR^n$ defined by $(\be_i,\mathbf{0}) \mapsto \be_i$ and $(\mathbf{0}, \be_{\overline{j}}) \mapsto -\be_j$, the polytope $\calU_n$ projects to the full root polytope $\calP_{A_{n-1}}$, and more generally the linear map sends the class of polytopes $\mathfrak{U}_n = \{\UIJ \mid I\subseteq[n], \overline{J}\subseteq[\overline{n}]\}$ to a class of polytopes which contains the acyclic root polytopes (see Example~\ref{ex:notARP} for an instance of a $\UIJ$ which does not project to an acyclic root polytope under this map).
Consequently, Ceballos et al.~\cite[Section 1.4]{CPS19} asked if it is possible to obtain subdivisions of the polytopes $\UIJ$ with the algebraic interpretation of using reductions of monomials in the subdivision algebra.
In the first part of this article, we show that this is indeed possible.

As explained by M\'esz\'aros~\cite[Section 4]{Mes16}, the subdivision algebras for acyclic root polytopes and a certain class of flow polytopes are the same.  Specifically, she showed that for each acyclic root polytope $\calP(G)$, there is a {\em flow polytope} $\calF_{\widetilde{G}}$ (on the fully augmented $\widetilde{G}$) which projects to a polytope $\calS(G)$ that is affinely equivalent to the acyclic root polytope.  

Our main result Theorem~\ref{thm.main}, proven in Section~\ref{sec:mainthm}, shows that for each polytope $\UIJ$, there exists an acyclic root polytope $\calP(G)$ and a flow polytope $\calF_{\hatG}$ (on the partially augmented $\hatG$) associated to the graph $G=\GIJ$ such that the following diagram commutes:
\[
\xymatrix{ 
\calF_{\hatG} \ar[d]_{\pi_1} \ar[r]^{\varphi_1} 
	& \UIJ \ar[d]^{\pi_2}\\
\calS(G)\ar[r]_{\varphi_2} & \calP(G)
}
\]
The maps $\varphi_1$ and $\varphi_2$ are integral equivalences, and the maps $\pi_1$ and $\pi_2$ are projections.  
As a consequence, this shows that the subdivision algebra for $\calP(G)$ and for $\calF_{\hatG}$ can be used to subdivide $\UIJ$.

In Section~\ref{sec.nuTamari}, we show that the triangulation of $\calF_{\hatG}$ obtained by reducing a monomial $M_G$ in the subdivision algebra in the {\em length reduction order} is a geometric realization of the $(\IJ)$-Tamari complex.
Relevant background on subdivision algebras, root polytopes, and flow polytopes is given in Section~\ref{sec.preliminaries}.

\section{Preliminaries} \label{sec.preliminaries}

\subsection{The subdivision algebra} \label{sec.subdivisionalgebra}
The subdivisions of acyclic root polytopes $\calP(G)$ (Section~\ref{sec.ARP}) and certain flow polytopes (Section~\ref{sec.FP}) can be encoded algebraically using M\'esz\'aros' subdivision algebra.
In this section, our brief exposition is based on the works~\cite{Mes11, Mes15, Mes16, MS20} where the theory and applications of the subdivision algebra have been extensively developed.

\begin{definition} 
\label{def:subdivAlgebra}
The {\em subdivision algebra} $\calS(\beta)$ is an associative commutative algebra over the ring of polynomials $\Z[\beta]$, generated by the elements $\{x_{ij} \mid 1\leq i < j \leq n\}$, subject to the relation
$x_{ij}x_{jk} = x_{ik}x_{ij} + x_{jk}x_{ik} + \beta x_{ik}$, if $1\leq i<j<k\leq n$. 
\end{definition}

Given a polynomial $p$ in $\calS(\beta)$, we say that $p'$ is a {\em reduction} of $p$ if $p'$ is obtained from $p$ by substituting a factor $x_{ij}x_{jk}$ of each monomial of $p$ divisible by $x_{ij}x_{jk}$ with $x_{ik}x_{ij} + x_{jk}x_{ik} + \beta x_{ik}$, with $1\leq i < j < k \leq n$. 
A consequence of a reduction is that $p'$ has two more monomials than $p$. A {\em reduced form} of a monomial $M$ in $\calS(\beta)$ is defined to be the final polynomial $p_m$ in a sequence $M=p_0,p_1,...,p_m$, where each polynomial is obtained from the previous polynomial via a reduction, and no reductions are possible in $p_m$.
Reduced forms of a monomial are not necessarily unique.

A reduction in $S(\beta)$ can be described in terms of graphs in the following way.
\begin{definition}
\label{def:reduction} 
A pair of edges $(i,j)$ and $(j,k)$ in a graph $G$ with vertex set $[n]$ are said to be {\em non-alternating} if $i< j< k$. 
A {\em reduction} on a pair of non-alternating edges $(i,j)$ and $(j,k)$ in $G$ produces the graphs $G_1$, $G_2$, and $G_3$ on the vertex set $[n]$ with the following three respective edge sets:
\begin{align*}
    E(G_1) &= E(G)\setminus \{(j,k)\} \cup \{(i,k)\},\\
    E(G_2) &= E(G)\setminus \{(i,j)\} \cup \{(i,k)\},\\
    E(G_3) &= E(G) \setminus \{(i,j),(j,k)\} \cup \{(i,k)\}.
\end{align*}
If a reduction can be performed on a pair of edges in $G$, we say that $G$ is {\em reducible}.
\end{definition}

A {\em reduction tree} $R_G$ of a graph $G$ is a rooted tree with nodes labeled by graphs that is constructed as follows. Begin with a tree consisting of only a root vertex $G$. 
If $G$ is reducible, perform a reduction on $G$ and add to the root the three children $G_1$, $G_2$ and $G_3$ resulting from the reduction as described in Definition~\ref{def:reduction}. Continue performing reductions on the reducible graphs at the leaves of the tree, adding the three corresponding children to the leaf at each iteration. 
When reducing a non-alternating pair $(i,j)$ and $(j,k)$ in a leaf of the tree, any other graphs containing these two edges must also be reduced at the same pair.
Continue growing the tree in this manner until the resulting ternary tree has no reducible graphs as its leaves. This tree is a {\em reduction tree} of $G$. See Figure~\ref{fig:reductionTree} for an example.

Sometimes it is convenient to only consider a {\em simple reduction tree} of $G$, where a {\em simple reduction} on a pair of non-alternating edges in $G$ produces the graphs $G_1$ and $G_2$ only. 
The simple reduction tree is constructed in the same way as the reduction tree, but using simple reductions at each step. 
The resulting tree is thus a binary tree. In Figure~\ref{fig:reductionTree}, removing the leaves indexed by monomials divisible by $\beta$ gives a simple reduction tree.

The leaves of a reduction tree $R_G$ with the same number of edges as the root $G$ are said to be {\em full-dimensional}. 
A simple reduction tree has only full-dimensional leaves.
The graphs which correspond with the full-dimensional leaves in $R_G$ are {\em maximal alternating graphs} on the vertex set of $G$.

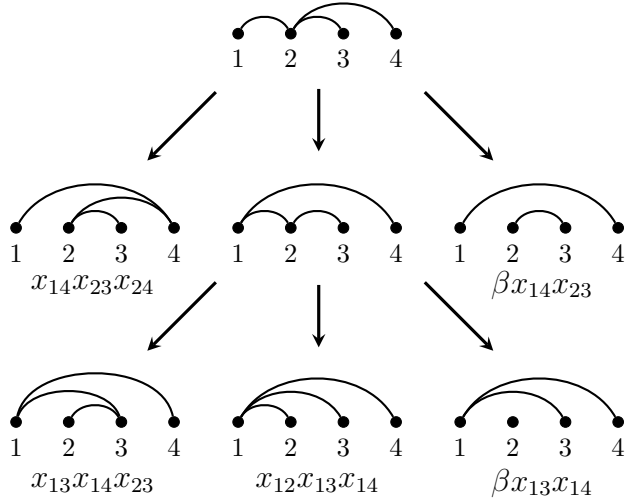
\begin{figure}
    \centering
\begin{tikzpicture}
\begin{scope}[scale=.7, xshift=0, yshift=0]
	\vertex[fill,label=below:\footnotesize{$1$}](a1) at (1,0) {};
	\vertex[fill,label=below:\footnotesize{$2$}](a2) at (2,0) {};
	\vertex[fill,label=below:\footnotesize{$3$}](a3) at (3,0) {};
	\vertex[fill,label=below:\footnotesize{$4$}](a4) at (4,0) {};	
	
	\draw[thick] (a1) to[out=60,in=120] (a2);
	\draw[thick] (a2) to[out=60,in=120] (a3);
	\draw[thick] (a2) to[out=60,in=120] (a4);
\end{scope}
\begin{scope}[scale=.7, xshift=-120, yshift=-105]
	\vertex[fill,label=below:\footnotesize{$1$}](a1) at (1,0) {};
	\vertex[fill,label=below:\footnotesize{$2$}](a2) at (2,0) {};
	\vertex[fill,label=below:\footnotesize{$3$}](a3) at (3,0) {};
	\vertex[fill,label=below:\footnotesize{$4$}](a4) at (4,0) {};	
	
	\draw[thick] (a2) to[out=60,in=120] (a3);
	\draw[thick] (a2) to[out=60,in=120] (a4);
	\draw[thick] (a1) to[out=60,in=120] (a4);
\end{scope}
\begin{scope}[scale=.7, xshift=0, yshift=-105]
	\vertex[fill,label=below:\footnotesize{$1$}](a1) at (1,0) {};
	\vertex[fill,label=below:\footnotesize{$2$}](a2) at (2,0) {};
	\vertex[fill,label=below:\footnotesize{$3$}](a3) at (3,0) {};
	\vertex[fill,label=below:\footnotesize{$4$}](a4) at (4,0) {};	

	\draw[thick] (a1) to[out=60,in=120] (a2);
	\draw[thick] (a2) to[out=60,in=120] (a3);
	\draw[thick] (a1) to[out=60,in=120] (a4);
\end{scope}
\begin{scope}[scale=.7, xshift=120, yshift=-105]
	\vertex[fill,label=below:\footnotesize{$1$}](a1) at (1,0) {};
	\vertex[fill,label=below:\footnotesize{$2$}](a2) at (2,0) {};
	\vertex[fill,label=below:\footnotesize{$3$}](a3) at (3,0) {};
	\vertex[fill,label=below:\footnotesize{$4$}](a4) at (4,0) {};	
	
	\draw[thick] (a2) to[out=60,in=120] (a3);
	\draw[thick] (a1) to[out=60,in=120] (a4);
\end{scope}
\begin{scope}[scale=.7, xshift=-120, yshift=-210]
	\vertex[fill,label=below:\footnotesize{$1$}](a1) at (1,0) {};
	\vertex[fill,label=below:\footnotesize{$2$}](a2) at (2,0) {};
	\vertex[fill,label=below:\footnotesize{$3$}](a3) at (3,0) {};
	\vertex[fill,label=below:\footnotesize{$4$}](a4) at (4,0) {};	
	
	\draw[thick] (a2) to[out=60,in=120] (a3);
	\draw[thick] (a1) .. controls (1.25,.75) and (2.75,.75) .. (a3);
	\draw[thick] (a1) .. controls (1.25,1.2) and (3.75,1.2) .. (a4);
\end{scope}
\begin{scope}[scale=.7, xshift=0, yshift=-210]
	\vertex[fill,label=below:\footnotesize{$1$}](a1) at (1,0) {};
	\vertex[fill,label=below:\footnotesize{$2$}](a2) at (2,0) {};
	\vertex[fill,label=below:\footnotesize{$3$}](a3) at (3,0) {};
	\vertex[fill,label=below:\footnotesize{$4$}](a4) at (4,0) {};	
	
	\draw[thick] (a1) to[out=60,in=120] (a2);
	\draw[thick] (a1) to[out=60,in=120] (a3);
	\draw[thick] (a1) to[out=60,in=120] (a4);
\end{scope}
\begin{scope}[scale=.7, xshift=120, yshift=-210]
	\vertex[fill,label=below:\footnotesize{$1$}](a1) at (1,0) {};
	\vertex[fill,label=below:\footnotesize{$2$}](a2) at (2,0) {};
	\vertex[fill,label=below:\footnotesize{$3$}](a3) at (3,0) {};
	\vertex[fill,label=below:\footnotesize{$4$}](a4) at (4,0) {};	
	
	\draw[thick] (a1) to[out=60,in=120] (a3);
	\draw[thick] (a1) to[out=60,in=120] (a4);
\end{scope}


\begin{scope}[scale=1.2, xshift=13, yshift=-15]
	\node[] (a1) at (0,0) {};
	\node[] (a2) at (-1,-1) {};
	\draw[-stealth, very thick] (a1)--(a2);
\end{scope}
\begin{scope}[scale=1.4, xshift=36, yshift=-12]
	\node[] (a1) at (0,0) {};
	\node[] (a2) at (0,-0.8) {};
	\draw[-stealth, very thick] (a1)--(a2);
\end{scope}
\begin{scope}[scale=1.2, xshift=72, yshift=-15]
	\node[] (a1) at (0,0) {};
	\node[] (a2) at (1,-1) {};
	\draw[-stealth, very thick] (a1)--(a2);
\end{scope}
\begin{scope}[scale=1.2, xshift=13, yshift=-75]
	\node[] (a1) at (0,0) {};
	\node[] (a2) at (-1,-1) {};
	\draw[-stealth, very thick] (a1)--(a2);
\end{scope}
\begin{scope}[scale=1.4, xshift=36, yshift=-65]
	\node[] (a1) at (0,0) {};
	\node[] (a2) at (0,-0.8) {};
	\draw[-stealth, very thick] (a1)--(a2);
\end{scope}
\begin{scope}[scale=1.2, xshift=72, yshift=-75]
	\node[] (a1) at (0,0) {};
	\node[] (a2) at (1,-1) {};
	\draw[-stealth, very thick] (a1)--(a2);
\end{scope}

\begin{scope}[scale=1, xshift=-35, yshift=-95]
	\node[] (a1) at (0,0) {$x_{14}x_{23}x_{24}$};
\end{scope}
\begin{scope}[scale=1, xshift=-35, yshift=-170]
	\node[] (a1) at (0,0) {$x_{13}x_{14}x_{23}$};
\end{scope}
\begin{scope}[scale=1, xshift=50, yshift=-170]
	\node[] (a1) at (0,0) {$x_{12}x_{13}x_{14}$};
\end{scope}
\begin{scope}[scale=1, xshift=135, yshift=-170]
	\node[] (a1) at (0,0) {$\beta x_{13}x_{14}$};
\end{scope}
\begin{scope}[scale=1, xshift=135, yshift=-95]
	\node[] (a1) at (0,0) {$\beta x_{14}x_{23}$};
\end{scope}

\end{tikzpicture}
    \caption{A reduction tree $R_G$ for the graph $G$ with $V(G) = [4]$ and edge set $E(G)= \{(1,2),(2,3),(2,4)\}$. The leaves of $R_G$ are encoded by the monomials in the reduced form of $M_G=x_{12}x_{23}x_{24}$ obtained in  Example~\ref{ex:reduction}.}
    \label{fig:reductionTree}
\end{figure}

When performing a reduction at a pair of non-alternating edges $a=(i,j)$ and $b=(j,k)$ in $G$, the added edge $(i,k)$ appearing in all three graphs $G_1$, $G_2$ and $G_3$ as in Definition~\ref{def:reduction} is treated as a formal sum $a+b(=b+a)$. Iterating this process beginning at the root lets us describe any edge in a node of $R_G$ as a sum of edges in $G$.

A graph $G$ gives rise to the monomial
$$M_G= \prod_{(a,b)\in E(G)} x_{ab}$$
in the subdivision algebra.
The relation $x_{ij}x_{jk} = x_{ik}x_{ij} + x_{jk}x_{ik} + \beta x_{ik}$ in Definition~\ref{def:subdivAlgebra} then encodes the reduction of $G$ at a pair of non-alternating edges $(i,j)$ and $(j,k)$. 
The monomials in the reduced form of $M_G$ give the leaves in the reduction tree. 

\begin{example}
\label{ex:reduction}
Let $G = ([4],\{(1,2),(2,3),(2,4)\})$ as depicted at the top of Figure~\ref{fig:reductionTree}. 
Then $M_G = x_{12}x_{23}x_{24}$, and a possible sequence of reductions of $M_G$ is as follows: 
\begin{align*}
    p_0 &= \mathbf{x}_{12}x_{23}\mathbf{x}_{24} \\
    p_1 &= \mathbf{x}_{12}\mathbf{x}_{23}x_{14} + x_{14}x_{23}x_{24}+ \beta x_{14}x_{23} \\
    p_2 &= x_{12}x_{13}x_{14} + x_{23}x_{13}x_{14} + x_{14}x_{23}x_{24} + \beta x_{13}x_{14} + \beta x_{14}x_{23} 
\end{align*}
The reduction in each step is performed on the pairs in boldface. 
This sequence of reductions corresponds with the reduction tree $R_G$ in Figure~\ref{fig:reductionTree}, with monomials of the reduced form $p_2$ of $M_G$ given by the five leaves of $R_G$, three of which are full-dimensional.  
\end{example}

\subsection{Acyclic root polytopes and the reduction lemma} \label{sec.ARP}
Recall from Section~\ref{sec.introduction} that the 
{\em full root polytope} is $\calP_{A_{n-1}} = \mathrm{conv}\{\mathbf{0}, \be_i-\be_j \mid 1\leq i < j \leq n \},$
and given an acyclic simple graph $G$ with vertex set $V\subseteq [n]$ such that every edge is directed from the smaller vertex to the larger one, the {\em acyclic root polytope associated to} $G$ is
$$\calP(G) = \calP_{A_{n-1}} \cap \mathrm{cone}(G) \subseteq \bbR^n,$$
where the edges $e=(i,j)\in E(G)$ are identified with the positive roots $\alpha_e = \be_i-\be_j$, and  $\mathrm{cone}(G) = \{ \sum_{e\in E(G)} c_e \alpha_e \mid c_e\in \bbR_{\geq0}\}$ is the cone associated to the edges of $G$.

Just as the full root polytope has a description as a convex hull of points, acyclic root polytopes have a similar alternative description.
Let $\Phi_G^+=\{ \alpha_e \mid e\in E(G)\}$ denote the set of positive roots of type $A_{n-1}$ corresponding to the edges of a graph $G$ on the vertex set $[n]$, and let $\overline{\Phi_{G}^+} = \Phi_{A_{n-1}}^+ \cap \mathrm{cone}(G)$ denote the set of positive roots of type $A_{n-1}$ contained in $\mathrm{cone}(G)$. 
Then we can also write
$$\calP(G) = \mathrm{conv} \{\mathbf{0}, \alpha \mid \alpha \in  \overline{\Phi_G^+}  \}.$$
The positive roots in $\overline{\Phi_{G}^+}$ determine the edges of a graph which may contain cycles.

A given acyclic root polytope can be generated from many different graphs, but amongst this class of graphs, there exists a unique acyclic representative. 
If the graph $G$ contains cycles and $\calP_{A_{n-1}}\cap \mathrm{cone}(G)$ happens to be an acyclic root polytope $\calP$, the {\em minimal graph} $\mathrm{min}(G)$ of $G$ is defined to be the unique acyclic representative in the class of graphs which generate $\calP$.  
The minimal graph has the property that no edge in $\mathrm{min}(G)$ can be written as a nonnegative linear combination of other edges in $\mathrm{min}(G)$.
For example, let $K_n$ and $P_n$ respectively denote the complete graph and path graph on the vertex set $[n]$.  
Then $\mathrm{min}(K_n)= P_n$ and $\calP(K_n) = \calP(P_n)$ is the full root polytope.

\begin{lemma}[{Reduction lemma for root polytopes~\cite[Proposition 1]{Mes11}, \cite[Lemma 4.1]{Mes16}}]
\label{lem:reductionLemmaARP}
Let $G=(V,E)$ be a graph with $V \subseteq [n]$ and $(i,j)$, $(j,k) \in E$ with $i<j<k$. For each $\ell\in[3]$, let $G_\ell$ be as in Definition~\ref{def:reduction}.
Then
$$\calP(G) = \calP(G_1) \cup \calP(G_2), 
\quad \calP(G_1) \cap \calP(G_2) = \calP(G_3),
\quad\hbox{and} \quad \calP(G_1)^\circ \cap \calP(G_2)^\circ = \emptyset, $$
where $\calP(G)$, $\calP(G_1)$, and $\calP(G_2)$ are of the same dimension, $\calP(G_3)$ is of one dimension less, and $\calP^\circ$ denotes the interior of $\calP$. 
\end{lemma}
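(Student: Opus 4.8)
The plan is to transfer all three identities to the cones $\mathrm{cone}(G_\ell)$, prove the set equalities by tracking conic representations, and then pin down the dimension and interior claims by exhibiting $\calP(G_3)$ as a common facet.

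\textbf{Reduction to cones.} For every graph $H$ on $[n]$ we have $\calP(H)=\calP_{A_{n-1}}\cap\mathrm{cone}(H)$, intersection with the fixed polytope $\calP_{A_{n-1}}$ commutes with unions and finite intersections, and $\mathbf 0\in\calP(H)$ gives $\dim\calP(H)=\dim\mathrm{span}\{\alpha_e:e\in E(H)\}$. So it suffices to prove the analogous identities for the cones, the disjointness of the relative interiors of $\mathrm{cone}(G_1)$ and $\mathrm{cone}(G_2)$, and the dimension claims for the spans of the edge sets. Observe that $\mathrm{span}\,E(G_1)=\mathrm{span}\,E(G_2)=\mathrm{span}\,E(G)=:W$, since $\alpha_{ik}=\alpha_{ij}+\alpha_{jk}$ recovers the deleted generator from $\alpha_{ik}$ and the surviving one.

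\textbf{The equalities.} Everything comes from $\alpha_{ik}=\alpha_{ij}+\alpha_{jk}$: when $c_{ij}\ge c_{jk}\ge0$,
\[
c_{ij}\alpha_{ij}+c_{jk}\alpha_{jk}=(c_{ij}-c_{jk})\alpha_{ij}+c_{jk}\alpha_{ik},
\]
and symmetrically for $c_{jk}\ge c_{ij}$. Running these substitutions in both directions shows that $v\in\mathrm{cone}(G_1)$ (resp.\ $\mathrm{cone}(G_2)$, resp.\ $\mathrm{cone}(G_3)$) if and only if $v$ admits a representation $v=\sum_{e\in E(G)}c_e\alpha_e$ with all $c_e\ge0$ and $c_{ij}\ge c_{jk}$ (resp.\ $c_{jk}\ge c_{ij}$, resp.\ $c_{ij}=c_{jk}$). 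The first identity then follows at once, and $\mathrm{cone}(G_3)\subseteq\mathrm{cone}(G_1)\cap\mathrm{cone}(G_2)$ is clear. For the reverse inclusion, if $v$ has representations $\mathbf c$ with $c_{ij}\ge c_{jk}$ and $\mathbf c'$ with $c'_{ij}\le c'_{jk}$, then every point of the segment $(1-t)\mathbf c+t\mathbf c'$ ($t\in[0,1]$) is again a representation of $v$, and the affine function $t\mapsto c^{(t)}_{ij}-c^{(t)}_{jk}$ changes sign, hence vanishes at some $t_0$; the representation $\mathbf c^{(t_0)}$ certifies $v\in\mathrm{cone}(G_3)$.

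\textbf{Dimensions and interiors.} Since $\alpha_{ik}\in\mathrm{span}\,E(G_3)$, we have $W=\mathrm{span}\,E(G_3)+\bbR\alpha_{ij}$, so $\mathrm{span}\,E(G_3)$ has codimension $0$ or $1$ in $W$, and the content of ``$\calP(G_3)$ has one dimension less'' is that it is $1$, i.e.\ $\alpha_{ij}\notin\mathrm{span}\,E(G_3)$. Granting this, choose a linear functional $\phi$ on $W$ vanishing on the hyperplane $\mathrm{span}\,E(G_3)$ with $\phi(\alpha_{ij})>0$; then $\phi\ge0$ on all generators of $\mathrm{cone}(G_1)$ (the common edges and $\alpha_{ik}$ lie in $\mathrm{span}\,E(G_3)$), and $\{\phi=0\}\cap\mathrm{cone}(G_1)=\mathrm{cone}(G_1)\cap\mathrm{span}\,E(G_3)=\mathrm{cone}(G_3)$ — the last equality because a representation of $v\in\mathrm{cone}(G_1)$ with $c_{ij}\ge c_{jk}$ can lie in $\mathrm{span}\,E(G_3)$ only if $c_{ij}=c_{jk}$. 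Thus $\mathrm{cone}(G_3)$ is a proper face of $\mathrm{cone}(G_1)$, and the mirror functional makes it a proper face of $\mathrm{cone}(G_2)$. Since a proper face is disjoint from the relative interior,
\[
\mathrm{relint}\,\mathrm{cone}(G_1)\cap\mathrm{relint}\,\mathrm{cone}(G_2)\subseteq\mathrm{cone}(G_1)\cap\mathrm{cone}(G_2)=\mathrm{cone}(G_3)
\]
is empty, and $\dim\calP(G_1)=\dim\calP(G_2)=\dim\calP(G)=\dim W$ while $\dim\calP(G_3)=\dim W-1$.

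\textbf{Main obstacle.} The bookkeeping in the first two steps is soft and robust. The substantive point is $\alpha_{ij}\notin\mathrm{span}\,E(G_3)$ --- equivalently, that $i$ and $j$ lie in different connected components of $G_3$, i.e.\ that deleting $(i,j)$ and $(j,k)$ separates $j$ from both $i$ and $k$. This is automatic, for instance, when the underlying graph of $G$ is a forest, so that $(i,j),(j,k)$ is the unique $i$--$k$ path. Verifying that the reduction genuinely drops the dimension is the step I expect to require real care; everything else reduces to the elementary convexity argument above.
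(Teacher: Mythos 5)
The paper does not prove this lemma; it cites it to \cite{Mes11} and \cite{Mes16}, so there is no in-paper argument to compare against, and your write-up has to stand on its own.

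The structural skeleton is sound. Passing from $\calP(G_\ell)=\calP_{A_{n-1}}\cap\mathrm{cone}(G_\ell)$ to the cones is justified: intersection with the fixed polytope commutes with unions and intersections, $\mathbf{0}\in\calP(G_\ell)$ together with $\alpha_e\in\calP(G_\ell)$ for each $e\in E(G_\ell)$ gives $\mathrm{aff}\,\calP(G_\ell)=\mathrm{span}\,E(G_\ell)=\mathrm{aff}\,\mathrm{cone}(G_\ell)$, and consequently $\calP(G_\ell)^\circ\subseteq\mathrm{relint}\,\mathrm{cone}(G_\ell)$, so disjointness of the relative interiors of the cones does transfer. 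The substitution $c_{ij}\alpha_{ij}+c_{jk}\alpha_{jk}=(c_{ij}-c_{jk})\alpha_{ij}+c_{jk}\alpha_{ik}$ and its mirror give the correct characterization of $\mathrm{cone}(G_\ell)$ by sign conditions on the $E(G)$-representation, and the interpolation argument for $\mathrm{cone}(G_1)\cap\mathrm{cone}(G_2)\subseteq\mathrm{cone}(G_3)$ is clean. The supporting functional $\phi$ exhibits $\mathrm{cone}(G_3)$ as a common proper face once one knows $\mathrm{span}\,E(G_3)$ is a hyperplane in $W$, and the passage to disjoint interiors is correct.

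You rightly isolate the substantive point as $\alpha_{ij}\notin\mathrm{span}\,E(G_3)$, but you stop short of closing it. Two remarks. First, the hypothesis is genuinely needed: for a graph with a cycle the claim can fail. For instance take $G$ on $\{1,2,3,4\}$ with edges $(1,2),(2,3),(2,4),(1,4)$ and reduce at $(1,2),(2,3)$; then $G_3=\{(1,3),(1,4),(2,4)\}$ and $e_1-e_2=(e_1-e_4)-(e_2-e_4)\in\mathrm{span}\,E(G_3)$, so $\mathrm{cone}(G_3)$ is full-dimensional and the ``one dimension less'' statement is false for this $G$. The paper's standing convention that $\calP(G)$ is built from an acyclic graph (its ambient Section~2.2 assumes $G$ is acyclic, and every $G$ the paper actually feeds into this lemma is a tree by Lemmas~3.6--3.7) supplies the missing hypothesis, so you should invoke it explicitly rather than leave the lemma's scope ambiguous. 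Second, once $G$ is a forest the verification is short and should be written out, not deferred: deleting $(i,j)$ splits the tree containing $i,j,k$ into $T_i\ni i$ and $T_j\ni j,k$; deleting $(j,k)$ then splits $T_j$ into $T_j'\ni j$ and $T_k\ni k$; adding $(i,k)$ joins $T_i$ and $T_k$ but not $T_j'$, so $i$ and $j$ lie in different components of $G_3$, which is exactly $\alpha_{ij}\notin\mathrm{span}\,E(G_3)$. With that inserted, your proof is complete; the only real issue is that you flagged this last step as ``requiring real care'' rather than just supplying the three-line argument.
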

Given an acyclic root polytope $\calP(G)$, M\'esz\'aros' reduction lemma explains how performing a reduction on $G$, or equivalently performing a reduction of the monomial $M_G$ in the subdivision algebra, is the same as dissecting $\calP(G)$ into two root polytopes $\calP(G_1)$ and $\calP(G_2)$ of the same dimension as $\calP(G)$, whose intersection $\calP(G_3)$ is a facet of each, and whose interiors are disjoint.
From this, we see that the leaves of a reduction tree $R_G$, or equivalently a reduced form of $M_G$, encodes a triangulation of $\calP(G)$.

\subsection{Flow polytopes and the reduction lemma} \label{sec.FP}

Let $G=(V,E)$ be a connected directed graph with vertex set $V\subseteq [n]$.
We assume that every edge is directed from the smaller vertex to the larger one.
In addition to this, we assume that the in-degree $\inedge_i$ of the vertex $i$ is positive if $i\in V$ and $2 \leq i \leq n$, and that the out-degree $\outedge_i$ is positive if $i\in V$ and $1 \leq i \leq n-1$. 
A directed graph with these properties will be called a {\em flow graph}. 
The {\em inner vertices} of a flow graph $G$ are the vertices that are not source or sink vertices, and edges whose end points are both inner vertices are {\em inner edges}.

Given a flow graph $G$ and a vector $\ba=(a_1,\ldots,a_{n-1},-\hbox{$\sum_{i=1}^{n-1}a_i$}) \in \bbZ^{n}$, an {\em $\ba$-flow on $G$}  is a tuple $(x_e)_{e\in E} \in \bbR^m_{\geq0}$ such that
$$\sum_{e \in \outedge(j)} x_e - \sum_{e \in \inedge(j)}  x_e = a_j$$
where $\inedge(j)$ and $\outedge(j)$ respectively denote the set of incoming and outgoing edges at vertex $j$, for $j=1,\ldots, n$. 
The {\em flow polytope of $G$ with net flow $\ba$} is the set $\calF_G(\ba)$ of $\ba$-flows on $G$.
In this paper we only consider flow polytopes with unit flow $\ba=\be_1 - \be_{n}$, and we will abbreviate the flow polytope of $G$ with unit flow as $\calF_G$.

The vertices of $\calF_G$ are the unit flows along maximal paths from vertex $1$ to vertex $n$. Such maximal paths are called {\em routes}. We denote the set of all routes in $G$ by $\calR(G)$.

\begin{definition} \label{defn:partially_augmented}
Given a graph $G$ with vertex set $V(G) \subseteq [n]$, its {\em augmented graph} is the connected graph $\widetilde{G}$ with vertex set $V(\widetilde{G}) = V(G) \cup \{s,t\} $, where $s<1<\cdots<n<t$, and edge set $E(\widetilde{G}) = E(G) \cup \{(s,i),(i,t)\mid i \in V(G)\}$. 
Removing from $E(\widetilde{G})$ the edges of the form $(s,i)$ where $i$ is a sink of $G$ and $(j,t)$ where $j$ is a source of $G$, we obtain the {\em partially augmented graph} $\hatG$.  
\end{definition}

Note that if $G$ is a flow graph, then directing the newly added edges of $\hatG$ from smaller to larger vertices also produces a flow graph.  

\begin{definition}
\label{def:F_G_i}
Let $G_N$ be a node of the reduction tree $R_G$. Define a map $\mu: E(G_N) \rightarrow \calR(\hatG)$ which takes an edge $(v_1,v_2) = e_{i_1} + \cdots + e_{i_\ell} \in G_N$, where $e_{i_j}\in E(G)$ for each $j\in [\ell]$, to the route $(s,v_1),e_{i_1},\ldots, e_{i_\ell},(v_2,t)$. Define the flow polytope $\calF_{\hatG_N}$ to be the convex hull of the vertices corresponding to the routes in the image of $\mu$ and the routes of the form $\{(s,i),(i,t)\}$ with $i\in [n]$. This definition of $\calF_{\hatG_N}$ is with respect to $G$ and is understood from context. 
\end{definition}

The following is a generalization of the reduction lemma for flow polytopes of fully augmented graphs due to Postnikov and Stanley.  
Also see~\cite[Lemma 2]{Mes15}, \cite[Proposition 2.3]{MS20}.

\begin{lemma}{(Reduction lemma for flow polytopes)}
\label{lem:reductionLemma}
Let $G$ be a flow graph with vertex set $V(G) \subseteq [n]$ and $(i,j)$, $(j,k) \in E(G)$ with $i<j<k$. For each $\ell\in[3]$, let $G_\ell$ be as in Definition~\ref{def:reduction}, and let $\calF_{\hatG_\ell}$ be as in Definition~\ref{def:F_G_i}. Then
$$\calF_{\hatG} = \calF_{\hatG_1} \bigcup  \calF_{\hatG_2},\quad \calF_{\hatG_1} \bigcap  \calF_{\hatG_2} = \calF_{\hatG_3}, \quad\text{and}\quad \calF_{\hatG_1}^\circ \bigcap  \calF_{\hatG_2}^\circ = \emptyset ,$$
where $\calF_{\hatG}$, $\calF_{\hatG_1}$, and $\calF_{\hatG_2}$ are of the same dimension, $\calF_{\hatG_3}$ is of one dimension less, and $\calP^\circ$ denotes the interior of $\calP$. 
\end{lemma}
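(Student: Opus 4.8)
The plan is to reduce the statement to the already-known reduction lemma for fully augmented graphs (Postnikov--Stanley, see also Mész\'aros), leveraging the fact established just before Definition~\ref{def:F_G_i} that directing the new edges of $\hatG$ from smaller to larger vertices again produces a flow graph. The first step is to pin down the relationship between the flow polytope $\calF_{\hatG_\ell}$ of Definition~\ref{def:F_G_i} and the flow polytope of some fully augmented graph. Concretely, I would identify $\calF_{\hatG}$ as a \emph{face} of $\calF_{\widetilde{G}}$: the fully augmented graph $\widetilde{G}$ has the extra edges $(s,i)$ for $i$ a sink of $G$ and $(j,t)$ for $j$ a source of $G$, and setting the corresponding flow coordinates to zero cuts out exactly $\calF_{\hatG}$. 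Since a reduction on the non-alternating pair $(i,j),(j,k)$ only involves inner edges of $G$ (both $i$ and $k$ are inner, else the pair would not be reducible in the relevant sense — $j$ is inner, and the new edge $(i,k)$ stays away from the deleted augmenting edges), the reduction commutes with passing to this face. Thus the three identities for the $\hatG_\ell$ follow by intersecting the three identities for the $\widetilde{G}_\ell$ (the classical reduction lemma) with the same coordinate subspace.

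An alternative, more self-contained route — which I would actually carry out in detail — is to mimic Mész\'aros' proof of Lemma~\ref{lem:reductionLemmaARP} and the proof of the Postnikov--Stanley lemma directly. Here the key observation is that the map $\mu$ of Definition~\ref{def:F_G_i} is a bijection between edges of $G_N$ and a distinguished set of routes of $\hatG_N$, and that under this correspondence the vertices of $\calF_{\hatG}$ split into two kinds: the ``trivial'' routes $\{(s,i),(i,t)\}$, which are shared by all the $\hatG_\ell$, and the routes $\mu(e)$ coming from edges $e$ of $G$. For a vertex of $\calF_{\hatG}$ indexed by a route through the edges $(i,j)$ and/or $(j,k)$, I would trace through exactly how the reduction $x_{ij}x_{jk} = x_{ik}x_{ij} + x_{jk}x_{ik} + \beta x_{ik}$ rewrites the corresponding flows, showing that a generic point of $\calF_{\hatG}$ lies in $\calF_{\hatG_1}$ or $\calF_{\hatG_2}$ according to whether its flow on $(i,j)$ exceeds its flow on $(j,k)$ or not, with equality characterizing $\calF_{\hatG_3}$. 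The dimension statements then follow by a route/support-counting argument: $\calF_{\hatG_1}$ and $\calF_{\hatG_2}$ each have the same number of ``independent'' routes as $\calF_{\hatG}$, while $\calF_{\hatG_3}$ loses one because the edge $(i,k)$ replaces two edges.

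The main obstacle, I expect, is bookkeeping around the partial augmentation: one must check that performing a reduction never creates or destroys a source or sink of the underlying graph in a way that changes which augmenting edges are present, so that $\hatG_1, \hatG_2, \hatG_3$ are built from $G_1, G_2, G_3$ by the \emph{same} augmentation recipe and the polytopes live in a common ambient space. Since the reduction deletes $(j,k)$ or $(i,j)$ and adds $(i,k)$, the only vertices whose in/out-degrees change are $i$, $j$, $k$; one verifies that $i$ keeps positive out-degree, $k$ keeps positive in-degree, and $j$ — which could lose its last out-edge in $G_1$ or its last in-edge in $G_2$ — is handled precisely by Definition~\ref{def:F_G_i} treating the new edge as the formal sum and by the fact that the trivial routes $\{(s,j),(j,t)\}$ remain available. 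Once this is settled, the three set-theoretic identities and the interior-disjointness follow either by the face argument or by the direct flow-comparison, and the dimension count is immediate. I would present the face-of-$\calF_{\widetilde{G}}$ argument as the clean proof and relegate the degree bookkeeping to a short lemma or remark.
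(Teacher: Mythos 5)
Your ``face of $\calF_{\widetilde{G}}$'' argument is correct and is essentially the paper's proof: the paper likewise reduces to the fully augmented case, observing that each deleted augmenting edge $(s,i)$, for $i$ a sink of $G$, supports only the single route $(s,i),(i,t)$ in all of $\widetilde{G},\widetilde{G}_1,\widetilde{G}_2,\widetilde{G}_3$ (since no reduction adds an outgoing edge at $i$), and then removes that route from each of the four polytopes --- which is precisely your intersection with the hyperplane where that flow coordinate vanishes. One small imprecision worth fixing: $i$ can be a source of $G$ and $k$ a sink, so they need not be inner vertices of $G$; the argument still goes through because the reduction only creates and deletes edges of $G$ (hence inner edges of $\hatG$) and never touches the augmenting edges removed in passing from $\widetilde{G}$ to $\hatG$.
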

\begin{proof}
From M\'esz\'aros' work we know that the reduction lemma holds for a fully
augmented graph $\widetilde{G}$. 
Suppose $(s,i)$ is an edge in $\widetilde{G}$, where $i$ is a sink of $G$. 
The only route in $\widetilde{G}$ using the edge $(s,i)$ is the route $(s,i),(i,t)$.
Furthermore, this holds in $\calF_{\widetilde{G}_\ell}$ for each $\ell\in [3]$, as no reduction can add an outgoing edge from $i$. 
Therefore, removing $(s,i)$ from $\widetilde{G}$ removes exactly the route $(s,i),(i,t)$ from $\calR(\widetilde{G}_1)$, $\calR(\widetilde{G}_2)$, and $\calR(\widetilde{G}_3)$. 
Now the subdivision of $\calF_{\widetilde{G}}$ into $\calF_{\widetilde{G}_1} \cup \calF_{\widetilde{G}_2}$ induces the subdivision of $\calF_{\widetilde{G}\setminus (s,i)}$ into $\calF_{\widetilde{G}_1\setminus (s,i)} \cup \calF_{\widetilde{G}_2\setminus (s,i)}$ satisfying the necessary conditions. 
The case of removing an edge $(j,t)$ where $j$ is a source of $G$ is similar.
Hence the reduction lemma for flow polytopes on partially augmented graphs follows.
\end{proof}

Given the partially augmented graph $\hatG$, Lemma~\ref{lem:reductionLemma} explains how performing a reduction on $G$, or equivalently performing a reduction of the monomial $M_G$ in the subdivision algebra, is the same as dissecting $\calF_{\hatG}$ into two flow polytopes $\calF_{\hatG_1}$ and $\calF_{\hatG_2}$ of the same dimension as $\calF_{\hatG}$, whose intersection $\calF_{\hatG_3}$ is a facet of each, and whose interiors are disjoint.
From this, we see that the leaves of a reduction tree $R_G$, or equivalently a reduced form of $M_G$, encodes a triangulation of $\calF_{\hatG}$.
In particular, the leaves of $R_G$ are the inner faces of the triangulation of $\calF_{\hatG}$.

\section{A subdivision algebra for \texorpdfstring{$\UIJ$}{}}
\label{sec:mainthm}

This section of the article is devoted to proving our main result.

\begin{restatable}{theorem}{main}
\label{thm.main}
Let $I\subseteq[n]$, $\overline{J}\subseteq[\overline{n}]$ be a valid pair. For the polytope $\UIJ$, there exists an acyclic root polytope $\calP(G)$ and a flow polytope $\calF_{\hatG}$ corresponding to a graph $G=\GIJ$ such that the following diagram commutes:
\[
\xymatrix{ 
\calF_{\hatG} \ar[d]_{\pi_1} \ar[r]^{\varphi_1} 
	& \UIJ \ar[d]^{\pi_2}\\
\calS(G)\ar[r]_{\varphi_2} & \calP(G)
}
\]
and $\varphi_1$ and $\varphi_2$ are integral equivalences.
\end{restatable}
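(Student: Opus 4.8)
The plan is to pin down the right-hand column and the bottom row of the diagram using existing results, then to build the integral equivalence $\varphi_1$ from a combinatorial bijection on vertices, and finally to verify commutativity vertex by vertex.

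First I would give the explicit definition of the graph $G = \GIJ$. The natural construction is to encode the $\prec$-interleaving of $I$ and $\overline{J}$: list $I \cup \overline{J}$ in $\prec$-order and let the vertices and edges of $\GIJ$ record this data together with the comparabilities $i \prec \overline{j}$ (for $i \in I$, $\overline{j}\in\overline{J}$) that index the vertices of $\UIJ$. One must check that $\GIJ$ is acyclic and is a flow graph, so that $\calP(G)$, the augmentation $\widetilde{G}$, and the partial augmentation $\hatG$ are all defined. The bottom edge $\varphi_2\colon \calS(G)\to\calP(G)$ is then exactly M\'esz\'aros' integral equivalence between the section $\calS(G)$ of a flow polytope and the acyclic root polytope $\calP(G)$ recalled in the introduction, \cite[Section~4]{Mes16}. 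For $\pi_2\colon \UIJ\to\calP(G)$ I would take the restriction to $\UIJ$ of a coordinate projection modeled on the Ceballos--Padrol--Sarmiento map $(\be_i,\mathbf{0})\mapsto \be_i$, $(\mathbf{0},\be_{\overline{j}})\mapsto-\be_j$, relabeled to match the vertices of $\GIJ$, and verify that its image is precisely $\calP(G)$. The map $\pi_1\colon \calF_{\hatG}\to\calS(G)$ is (the restriction to $\calF_{\hatG}$ of) M\'esz\'aros' projection that forgets the flow on the auxiliary edges incident to $s$ and $t$.

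The heart of the proof is the construction of $\varphi_1$. I would set up an explicit bijection $\Psi$ between the vertices of $\calF_{\hatG}$ and the vertices of $\UIJ$: the vertices of $\calF_{\hatG}$ are the unit flows on the routes of $\hatG$ described in Definition~\ref{def:F_G_i} --- the one-edge routes $(s,v_1),e,(v_2,t)$ for $e=(v_1,v_2)\in E(G)$, together with the trivial routes $(s,i),(i,t)$ through the inner vertices $i$ of $G$ --- and $\Psi$ should send such a route to the pair $(\be_i,\be_{\overline{j}})$ encoded by its two non-auxiliary endpoints, with the trivial route through $i$ mapping to the ``diagonal'' pair at $i$. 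I would then write down the affine map $\varphi_1\colon \bbR^{E(\hatG)}\to\bbR^{2n}$ realizing $\Psi$ --- concretely, it records the head and tail of a route --- and check: (i) $\varphi_1$ is affine-linear and agrees with $\Psi$ on every vertex, hence restricts to an affine bijection $\calF_{\hatG}\to\UIJ$; and (ii) $\varphi_1$ carries $\mathrm{aff}(\calF_{\hatG})\cap\bbZ^{E(\hatG)}$ onto $\mathrm{aff}(\UIJ)\cap\bbZ^{2n}$. For (ii) it suffices to choose an affine basis consisting of vertices of $\calF_{\hatG}$ and check that its image is an affine basis consisting of vertices of $\UIJ$; since the vertices of both polytopes are $0/1$ vectors, the change of basis is unimodular. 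Finally, commutativity $\pi_2\circ\varphi_1=\varphi_2\circ\pi_1$ is an equality of affine maps, so it is enough to verify it on the (affinely spanning) vertices of $\calF_{\hatG}$: for a route $\rho$ of $\hatG$, the top-then-right side gives the vertex $(\be_i,\be_{\overline{j}})$ of $\UIJ$ and then the root $\be_i-\be_j$ (resp.\ $\mathbf{0}$, up to the relabeling), while the left-then-bottom side gives the flow $\rho$ with its source/sink coordinates deleted, a vertex of $\calS(G)$, which $\varphi_2$ identifies with the corresponding vertex of $\calP(G)$; matching these two descriptions against the definition of $\Psi$ closes the argument.

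I expect the main obstacle to be the construction of $\Psi$ in the third paragraph: designing it so that it simultaneously matches the one-edge and trivial routes of $\hatG$ with the comparable pairs $(i,\overline{j})$, is realized by an \emph{affine} map, and is \emph{unimodular} --- together with the bookkeeping around the partial augmentation (showing, in the spirit of the proof of Lemma~\ref{lem:reductionLemma}, that deleting from $\widetilde{G}$ the edges from $s$ to the sinks of $G$ and from the sources of $G$ to $t$ removes exactly the routes with no $\UIJ$-counterpart) and around degenerate valid pairs, where $\GIJ$ and the relabelings may need minor adjustments.
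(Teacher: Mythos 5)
Your proposal gets the overall architecture right (pin down $\varphi_2,\pi_1$ via M\'esz\'aros, take $\pi_2$ to be a relabeled Ceballos--Padrol--Sarmiento projection, build $\varphi_1$ from a vertex bijection, verify commutativity on vertices), but there is a genuine gap in the step you yourself flag as the heart of the proof: the construction of $\Psi$ and hence $\varphi_1$.

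You characterize the vertices of $\calF_{\hatG}$ as ``the one-edge routes $(s,v_1),e,(v_2,t)$ for $e\in E(G)$, together with the trivial routes $(s,i),(i,t)$.'' That is not the vertex set of the flow polytope $\calF_{\hatG}$. Its vertices are unit flows on \emph{all} routes $(s,v_1,\dots,v_\ell,t)$ of $\hatG$, including routes traversing many edges of $G$. For the running example $I=\{1,2,3,5,9\}$, $\overline J=\{\overline 2,\overline 7,\overline 8,\overline 9\}$, your description yields $5+3=8$ vertices, whereas $\calF_{\hatG}$ and $\UIJ$ each have $15$ vertices (the route $(s,1,2,5,9,t)$, for instance, is not a convex combination of one-edge and trivial routes since it alone carries flow on the inner edge $(1,2)$ while carrying none on $(2,t)$). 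So the $\Psi$ you describe cannot be a bijection onto the $15$ vertices $(\be_i,\be_{\overline j})$ of $\UIJ$, and $\varphi_1$ fails before the unimodularity check is even reached. The missing ingredient is Lemma~\ref{lem:uniquepath}: because $\GIJ$ is the \emph{minimal} graph of $\mathrm{prec}(\AIJ)$ --- in fact an undirected tree by Lemmas~\ref{lem:nonoriented_acyclic} and~\ref{lem:Gconn} --- any two of its vertices are joined by at most one directed path, so a route of $\hatG$ is uniquely determined by its auxiliary edges $(s,i)$ and $(\precj,t)$, giving exactly one route for each edge $(i,\overline j)$ of $\AIJ$ and hence the desired bijection. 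This lemma in turn depends on the precise construction of $\GIJ$, which you leave vague (``record this data together with the comparabilities''): the actual construction is to pass from $\AIJ$ to the quotient $\mathrm{prec}(\AIJ)$, identifying $\overline j$ with its $\prec$-predecessor when that predecessor lies in $I$, and then to take the minimal graph; without this, the uniqueness and acyclicity lemmas have no foothold.

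Two smaller points. First, the relabeling of the CPS projection that you wave at must be made explicit: $\pi_2$ sends $(\mathbf 0,\be_{\overline j})\mapsto -\be_{\precj}$ rather than $-\be_j$; the unrelabeled projection can fail to land in an acyclic root polytope (Example~\ref{ex:notARP}), and the $\mathrm{prec}$-shift is precisely what repairs this. Second, the unimodularity argument ``both vertex sets are $0/1$, so the change of basis is unimodular'' is not automatic in general; the paper instead argues via dimension-preservation between the $[0,1]$-cubes $[0,1]^{\widehat m}$ and $[0,1]^{2n}$, which you should mirror once the vertex bijection is repaired.
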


As originally noted by Ceballos, Padrol, and Sarmiento in \cite{CPS19}, the projection $\bbR^{2n}\rightarrow \bbR^n$ that sends the subspace $\mathrm{span}\{(\be_i,\be_{\overline{i}}) \mid i\in [n]\} \subseteq \bbR^{2n}$ to zero sends the class of polytopes $\{\UIJ \mid I\subseteq[n], \overline{J}\subseteq[\overline{n}] \}$ to a class of polytopes which contains the acyclic root polytopes. 
When the projection of $\UIJ$ is an acyclic root polytope, the subdivision algebra for that root polytope can be applied to $\UIJ$ to obtain subdivisions, but if the projection of $\UIJ$ does not result in an acyclic root polytope (see Example~\ref{ex:notARP} for instance), this scheme fails.
However, we show how a modified projection $\pi_2$ (that depends on the choice of $\IJ$) sends $\UIJ$ to an acyclic root polytope $\calP(G)$ for a graph $G=\GIJ$ in a way that M\'esz\'aros' reduction lemma for acyclic root polytopes applies.
Additionally, we show that there is an integral equivalence $\varphi_1$ between a flow polytope $\calF_{\hatG}$ and $\UIJ$, so that the reduction lemma for flow polytopes of partially augmented graphs also applies to $\UIJ$.

From this theorem, we can conclude that reduced forms of monomials in the subdivision algebra for $\calP(G)$ and $\calF_{\hatG}$ is an algebraic tool for triangulating $\UIJ$.

\subsection{Constructing the graph \texorpdfstring{$\GIJ$}{} }
\label{sec.GIJ}

Each of the four polytopes in Theorem~\ref{thm.main} depends on a graph.  
We next explain the construction of each of these graphs.

Let $I\subseteq[n]$ and $\overline{J}\subseteq[\overline{n}]$, and let $\prec$ denote the total order $1 \prec \overline{1} \prec 2 \prec \overline{2} \prec \cdots \prec n \prec \overline{n}$ on $[n] \sqcup [\overline{n}]$. 
Denote by $\prec_{\IJ}$ the order induced on $I\sqcup \overline{J}$ by $\prec$. 
We say that the pair $(\IJ)$ is {\em valid} if $\min(I\sqcup \overline{J}) \in I$ and $\max(I\sqcup \overline{J}) \in \overline{J}$.
All pairs $(\IJ)$ are assumed to be valid unless otherwise stated. 
The restriction to valid pairs $(\IJ)$ does not change $\calU_{I,\overline{J}}$, but ensures that the graph $\GIJ$ that we will construct is connected, and guarantees that the polytopes in Theorem~\ref{thm.main} are non-empty.

To obtain the desired projection between $\UIJ$ and an acylic root polytope, we need a relabeling of certain elements in $\overline{J}$. 
Define the map $\mathrm{prec}: \overline{J} \rightarrow [n]$ as follows. 
If with respect to the order $\prec_{\IJ}$, the element $\overline{j} \in \overline{J}$ is not immediately preceded by an element in $I$, then $\mathrm{prec}(\overline{j})= j$.  Otherwise, 
$\mathrm{prec}(\overline{j})$ is defined to be this immediately preceding element that is in $I$.

Let $\AIJ$ be the graph with vertex set $I\sqcup \overline{J}$ and edge set $\{(i,\overline{j}) \mid i \prec_{\IJ} \overline{j}, i\in I, \overline{j}\in \overline{J} \}$.
If each $\overline{j}\in \overline{J}$ is identified with $\mathrm{prec}(\overline{j})$, then the identification partitions $I\sqcup \overline{J}$ into blocks of size one or two.
For any subgraph $H$ of $\AIJ$, define $\mathrm{prec}(H)$ to be the quotient graph of $H$ under this partition of its vertices.

We may identify the vertex set of $\mathrm{prec}(\AIJ)$ with $I \cup \mathrm{prec}(\overline{J})$, and its edge set is $\{ (i, \mathrm{prec}(\overline{j}) ) \mid i < \mathrm{prec}(\overline{j}), i\in I, \overline{j}\in \overline{J} \}$.
Observe that by construction, a vertex $v$ of $\mathrm{prec}(\AIJ)$ is a source if and only if $v\in I \backslash \mathrm{prec}(\overline{J})$, it is a sink if and only if $v\in \mathrm{prec}(\overline{J}) \backslash I$.  
Otherwise, a vertex $v\in I \cap \mathrm{prec}(\overline{J})$ has both incoming and outgoing edges.

Let $H$ be a simple graph on a linearly ordered vertex set whose edges are ordered from the smaller to the larger vertex.
Define the {\em minimal graph} $\min(H)$ to be the graph obtained from $H$ by removing every edge $(i,j)$ such that there is a directed path $i, i_1,\ldots, i_k, j$ in $H$ with $k \geq1$. This aligns with the previous definition of a minimal graph in Section~\ref{sec.ARP}. 

\begin{definition}\label{def:GIJ}
For a valid pair $(\IJ)$, the graph $\GIJ$ is the minimal graph of the graph $\mathrm{prec}(\AIJ)$. 
\end{definition}
We note that the linear order $\prec_{\IJ}$ on $I\sqcup \overline{J}$ induces an orientation on the edges of $\GIJ$ so that edges are directed from the smaller to the larger vertex.
Also, the tail of each edge is in $I$, while the head of each edge is in $\mathrm{prec}(\overline{J})$.

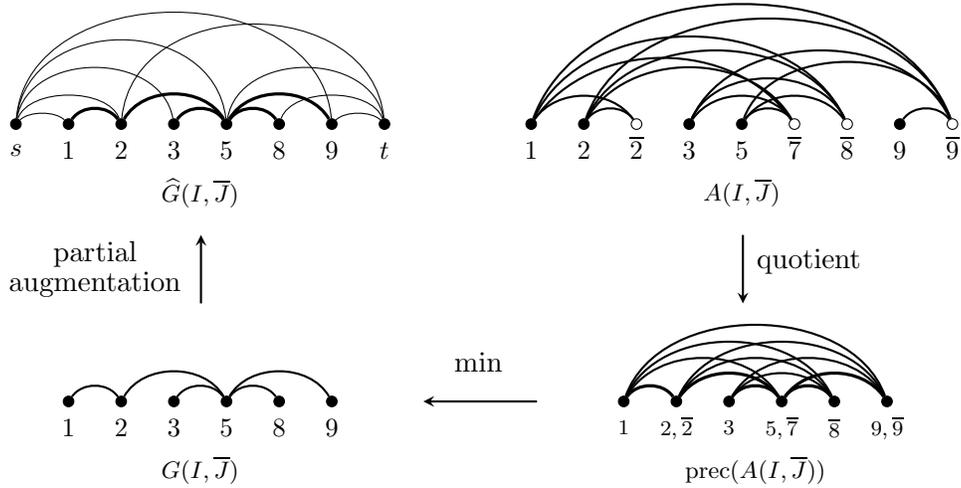
\begin{figure}
    \centering
\begin{tikzpicture}
\begin{scope}[scale=.7, xshift=250, yshift=150]
    \node() at (5,-1.3) {\scriptsize$\AIJ$};
	\vertex[fill](i1) at (1,0) {};
	\vertex[fill](i2) at (2,0) {};
	\vertex[](j2) at (3,0) {};
	\vertex[fill](i3) at (4,0) {};
	\vertex[fill](i5) at (5,0) {};
	\vertex[](j7) at (6,0) {};
	\vertex[](j8) at (7,0) {};
	\vertex[fill](i9) at (8,0) {};
	\vertex[](j9) at (9,0) {};
	\node() at (1,-0.5) {\footnotesize{$1$}};
	\node() at (2,-0.5) {\footnotesize{$2$}};
	\node() at (3,-0.45) {\footnotesize{$\overline{2}$}};
	\node() at (4,-0.5) {\footnotesize{$3$}};
	\node() at (5,-0.5) {\footnotesize{$5$}};
	\node() at (6,-0.45) {\footnotesize{$\overline{7}$}};
	\node() at (7,-0.45) {\footnotesize{$\overline{8}$}};
	\node() at (8,-0.5) {\footnotesize{$9$}};
	\node() at (9,-0.45) {\footnotesize{$\overline{9}$}};
	
	\draw[thick] (i1) to[out=55,in=125] (j2);%
	\draw[thick] (i1) to[out=55,in=125] (j7);%
	\draw[thick] (i1) to[out=65,in=115] (j8);
	\draw[thick] (i1) to[out=70,in=110] (j9);
	\draw[thick] (i2) to[out=55,in=125] (j2);%
	\draw[thick] (i2) to[out=60,in=120] (j7);
	\draw[thick] (i2) to[out=65,in=115] (j8);%
	\draw[thick] (i2) to[out=70,in=110] (j9);
	\draw[thick] (i3) to[out=55,in=125] (j7);%
	\draw[thick] (i3) to[out=65,in=115] (j8);%
	\draw[thick] (i3) to[out=65,in=115] (j9);%
	\draw[thick] (i5) to[out=55,in=125] (j7);%
	\draw[thick] (i5) to[out=55,in=125] (j8);%
	\draw[thick] (i5) to[out=70,in=110] (j9);
	\draw[thick] (i9) to[out=55,in=125] (j9);%
\end{scope}

\begin{scope}[scale=.7, xshift=300, yshift=0]
    \node() at (3.5,-1.3) {\scriptsize$\mathrm{prec}(\AIJ)$};
	\vertex[fill](a1) at (1,0) {};
	\vertex[fill](a2) at (2,0) {};
	\vertex[fill](a3) at (3,0) {};
	\vertex[fill](a5) at (4,0) {};
	\vertex[fill](a8) at (5,0) {};
	\vertex[fill](a9) at (6,0) {};
	\node() at (1,-.5) {\tiny{$1$}};
	\node() at (2,-.5) {\tiny{$2,\overline{2}$}};
	\node() at (3,-.5) {\tiny{$3$}};
	\node() at (4,-.5) {\tiny{$5,\overline{7}$}};
	\node() at (5,-.5) {\tiny{$\overline{8}$}};
	\node() at (6,-.5) {\tiny{$9,\overline{9}$}};
	\draw[very thick] (a1) to[out=55,in=125] (a2);%
	\draw[thick] (a1) to[out=60,in=120] (a5);%
	\draw[thick] (a1) to[out=65,in=115] (a8);%
	\draw[thick] (a1) to[out=70,in=110] (a9);
	\draw[very thick] (a2) to[out=55,in=125] (a5);%
	\draw[thick] (a2) to[out=60,in=120] (a8);%
	\draw[thick] (a2) to[out=65,in=115] (a9);%
	\draw[very thick] (a3) to[out=55,in=125] (a5);%
	\draw[thick] (a3) to[out=55,in=125] (a8);%
	\draw[thick] (a3) to[out=60,in=120] (a9);%
	\draw[very thick] (a5) to[out=55,in=125] (a8);%
	\draw[very thick] (a5) to[out=55,in=125] (a9);%
\end{scope}

\begin{scope}[scale=.7, xshift=0, yshift=0]
    \node() at (3.5,-1.3) {\scriptsize$\GIJ$};
	\vertex[fill](a1) at (1,0) {};
	\vertex[fill](a2) at (2,0) {};
	\vertex[fill](a3) at (3,0) {};
	\vertex[fill](a5) at (4,0) {};
	\vertex[fill](a8) at (5,0) {};
	\vertex[fill](a9) at (6,0) {};
	\node() at (1,-.5) {\footnotesize{$1$}};
	\node() at (2,-.5) {\footnotesize{$2$}};
	\node() at (3,-.5) {\footnotesize{$3$}};
	\node() at (4,-.5) {\footnotesize{$5$}};
	\node() at (5,-.5) {\footnotesize{$8$}};
	\node() at (6,-.5) {\footnotesize{$9$}};	
	\draw[thick] (a1) to[out=55,in=125] (a2);
	\draw[thick] (a2) to[out=60,in=120] (a5);
	\draw[thick] (a3) to[out=55,in=125] (a5);
	\draw[thick] (a5) to[out=55,in=125] (a8);
	\draw[thick] (a5) to[out=60,in=120] (a9);
\end{scope}

\begin{scope}[scale=.7, xshift=0, yshift=150]
    \node() at (3.5,-1.3) {\scriptsize$\hatG(\IJ)$};
	\vertex[fill](as) at (0,0) {};
	\vertex[fill](a1) at (1,0) {};
	\vertex[fill](a2) at (2,0) {};
	\vertex[fill](a3) at (3,0) {};
	\vertex[fill](a5) at (4,0) {};	
	\vertex[fill](a8) at (5,0) {};	
	\vertex[fill](a9) at (6,0) {};	
	\vertex[fill](at) at (7,0) {};
	\node() at (0,-.5) {\footnotesize{$s$}};
	\node() at (1,-.5) {\footnotesize{$1$}};
	\node() at (2,-.5) {\footnotesize{$2$}};
	\node() at (3,-.5) {\footnotesize{$3$}};
	\node() at (4,-.5) {\footnotesize{$5$}};
	\node() at (5,-.5) {\footnotesize{$8$}};
	\node() at (6,-.5) {\footnotesize{$9$}};
	\node() at (7,-.5) {\footnotesize{$t$}};
	
	\draw[very thick] (a1) to[out=55,in=125] (a2);
	\draw[very thick] (a2) to[out=60,in=120] (a5);
	\draw[very thick] (a3) to[out=55,in=125] (a5);
	\draw[very thick] (a5) to[out=55,in=125] (a8);
	\draw[very thick] (a5) to[out=60,in=120] (a9);

	\draw[] (as) .. controls (0.25,0.25) and (0.75,0.25) .. (a1);
	\draw[] (as) .. controls (0.25,0.7) and (1.75,0.7) .. (a2);
	\draw[] (as) .. controls (0.25,1.4) and (2.75,1.4) .. (a3);
	\draw[] (as) .. controls (0.25,2.1) and (3.75,2.1) .. (a5);
	\draw[] (as) .. controls (0.25,2.8) and (5.75,2.8) .. (a9);
	\draw[] (a2) .. controls (2.25,2.5) and (6.75,2.5) .. (at);
	\draw[] (a5) .. controls (4.25,1.4) and (6.75,1.4) .. (at);
	\draw[] (a8) .. controls (5.25,0.7) and (6.75,0.7) .. (at);
	\draw[] (a9) .. controls (6.25,0.25) and (6.75,0.25) .. (at);
\end{scope}

\begin{scope}[scale=1, xshift=70, yshift=33]
	\node[] (a1) at (0,0) {};
	\node[] (a2) at (0,1.2) {};
	\draw[-stealth, thick] (a1)--(a2);
\end{scope}

\begin{scope}[scale=1, xshift=275, yshift=33]
	\node[] (a1) at (0,0) {};
	\node[] (a2) at (0,1.2) {};
	\draw[stealth-, thick] (a1)--(a2);
\end{scope}

\begin{scope}[scale=1, xshift=150, yshift=0]
	\node[] (a1) at (0,0) {};
	\node[] (a2) at (1.8,0) {};
	\draw[stealth-, thick] (a1)--(a2);
\end{scope}

\begin{scope}[scale=1, xshift=300, yshift=53]
	\node[] (a1) at (0,0) {\small quotient};
\end{scope}

\begin{scope}[scale=1, xshift=175, yshift=15]
	\node[] (a1) at (0,0) {\small $\min$};
\end{scope}

\begin{scope}[scale=1, xshift=30, yshift=50]
    \node[] (a1) at (0,.2) {\small partial};
	\node[] (a1) at (0,-.2) {\small augmentation};
\end{scope}

\end{tikzpicture}
    \caption{Various graphs associated to the valid pair $I = \{1,2,3,5,9\}$ and $\overline{J} = \{\overline{2},\overline{7}, \overline{8},\overline{9} \}$. 
    The graph $\AIJ$ (top right), its quotient graph $\mathrm{prec}(\AIJ)$ (bottom right), its subsequent minimal graph $\GIJ$ (bottom left), and the partially augmented graph $\hatG(\IJ)$ (top left).
    See Remark~\ref{rem.fourgraphs} for an explanation of the roles played by these graphs in Theorem~\ref{thm.main}. } 
    \label{fig:G(I,J)}
\end{figure}

\begin{example} \label{ex:running}
Consider the valid pair $I= \{1,2,3,5,9 \}, \overline{J}=\{\bar{2},\bar{7}, \bar{8}, \bar{9}\}$.  
Then $1 \prec 2 \prec \bar{2} \prec 3 \prec 5 \prec \bar{7} \prec \bar{8} \prec 9 \prec\bar{9}$ with respect to the order $\prec_{\IJ}$, so $\mathrm{prec}(\overline{J}) = \{2,5,8,9 \}$. 
The graphs $\AIJ$ and $\mathrm{prec}(\AIJ)$ are shown on the right of Figure~\ref{fig:G(I,J)}.

The minimal graph $G=\GIJ$ has vertex set $I \cup \mathrm{prec}(\overline{J})=\{1,2,3,5,8,9 \}$. 
The partially augmented graph $\hatG=\hatG(\IJ)$ has vertex set $V(G) \cup \{ s,t\}$ and edge set $E(G) \cup \{(s,i) \mid i\in I \} \cup \{(\mathrm{prec}(\overline{j}, t) \mid \overline{j}\in \overline{J}\}$. 
These are shown on the left of Figure~\ref{fig:G(I,J)}.
\end{example}

\begin{remark}\label{rem.fourgraphs}
Looking ahead, the four graphs displayed in Figure~\ref{fig:G(I,J)} are central to the four polytopes of Theorem~\ref{thm.main} in the sense that the polytopes $\calS(G)$ and $\calF_{\hatG}$ can be seen as the convex hull of (sub)paths of the graphs $G=\GIJ$ and $\hatG$ respectively, while the polytopes $\UIJ$ and $\calP(G)$ can be seen as the convex hull of edges of the graphs $A=\AIJ$ and $\mathrm{prec}(A)$.
Perhaps this viewpoint helps to illuminate why these four polytopes have the `same' subdivision algebra.
\end{remark}

The following lemma is required for a key characterization of the routes in $\calF_{\hatG}$ used in the proof of Theorem~\ref{thm.main}.
\begin{lemma}
\label{lem:uniquepath}
For any two vertices $v$ and $w$ of $\GIJ$ with $v < w$ there exists at most one directed path from $v$ to $w$. 
\end{lemma}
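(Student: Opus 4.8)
The plan is to exploit the very specific structure of $\GIJ$ as the minimal graph of $\mathrm{prec}(\AIJ)$. First I would recall that the vertex set of $\GIJ$ is $I \cup \mathrm{prec}(\overline{J})$, that every edge has its tail in $I$ and its head in $\mathrm{prec}(\overline{J})$, and — crucially — that in the original graph $\mathrm{prec}(\AIJ)$ there is an edge from $i$ to $\mathrm{prec}(\overline{j})$ for \emph{every} pair with $i < \mathrm{prec}(\overline{j})$, $i \in I$, $\overline{j} \in \overline{J}$. So $\mathrm{prec}(\AIJ)$ is, up to relabeling, a ``complete bipartite-like'' graph compatible with the linear order: between any source-side vertex $i$ and any sink-side vertex $w$ with $i < w$, there is a direct edge. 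This is the key combinatorial fact driving the uniqueness.

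The main step is then to show that a directed path $v = v_0, v_1, \ldots, v_k = w$ in $\GIJ$ of length $k \ge 2$ cannot exist, i.e. that every directed path in $\GIJ$ has length at most $1$; combined with the fact that $\GIJ$ is simple (so at most one edge between any ordered pair), this gives the lemma. Suppose such a path with $k \ge 2$ exists. Every $v_i$ for $1 \le i \le k-1$ is an internal vertex of the path, hence has both an incoming and an outgoing edge in $\GIJ$, so $v_i \in I \cap \mathrm{prec}(\overline{J})$. Now look at the edge $(v_0, v_1)$: since $v_0 \in I$, $v_1 \in \mathrm{prec}(\overline{J})$, and $v_0 < v_1 < v_2 < \cdots < v_k$, in particular $v_0 < v_k$ with $v_0 \in I$ and $v_k \in \mathrm{prec}(\overline{J})$, so the pair $(v_0, v_k)$ is an edge of $\mathrm{prec}(\AIJ)$. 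But then the directed path $v_0, v_1, \ldots, v_k$ in $\GIJ \subseteq \mathrm{prec}(\AIJ)$ certifies that the edge $(v_0, v_k)$ must be removed in passing to the minimal graph — contradicting the fact that this path lies in $\GIJ$, whose edges are precisely those \emph{not} removed. Wait: one must be slightly careful, because the path witnessing removal of $(v_0,v_k)$ must itself have all its edges \emph{in} $\mathrm{prec}(\AIJ)$, not necessarily in $\GIJ$. But $\GIJ$ is a subgraph of $\mathrm{prec}(\AIJ)$, so any path in $\GIJ$ is a path in $\mathrm{prec}(\AIJ)$, and that is all that is needed for the definition of $\min$ to force $(v_0,v_k) \notin E(\GIJ)$ — yet $(v_0, v_k)$ need not be on our path, so no contradiction yet.

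To actually close this I would instead argue by choosing a \emph{minimal counterexample}: among all pairs $v < w$ admitting two distinct directed paths in $\GIJ$, pick one minimizing $w - v$, and among those, minimize the total number of edges of the two paths. Two distinct directed paths $P, P'$ from $v$ to $w$ in $\GIJ$ must each have length $\ge 1$; if either has length exactly $1$, say $P = (v,w)$, then $P'$ has some internal vertex $u$ with $v < u < w$, and $P'$ restricts to a directed path from $v$ to $u$; but $v \in I$, $u \in I \cap \mathrm{prec}(\overline{J})$, so $(v,u) \in E(\mathrm{prec}(\AIJ))$, and the existence of the length-$\ge 1$ path $P'|_{[v,u]}$ inside $\mathrm{prec}(\AIJ)$ forces $(v,u) \notin E(\GIJ)$. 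Likewise $(u,w)\notin E(\GIJ)$ unless the sub-path has length $1$. Iterating, every edge of $P'$ would have to be ``non-minimal'' — but $P'$ consists of edges of $\GIJ$, which are exactly the minimal ones, contradiction. The case where both $P$ and $P'$ have length $\ge 2$ is handled by noting they share the endpoints but differ somewhere; take the first vertex after $v$ where they diverge and the first where they reconverge, obtaining a strictly shorter pair of distinct paths (on a sub-interval), contradicting minimality — or more directly, the first internal vertex $v_1$ of $P$ satisfies $v_1 \in I \cap \mathrm{prec}(\overline{J})$ and $(v, v_1) \in E(\GIJ)$, so $(v,v_1)$ is a minimal edge, meaning there is \emph{no} directed path of length $\ge 2$ from $v$ to $v_1$ in $\mathrm{prec}(\AIJ)$; this already pins down a lot of structure and lets one induct on $w - v$.

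\textbf{Main obstacle.} The delicate point — and the one I expect to require the most care in writing — is correctly handling the definition of $\min(H)$: an edge $(i,j)$ is removed precisely when there is a directed path $i, i_1, \dots, i_k, j$ \emph{in $H = \mathrm{prec}(\AIJ)$} with $k \ge 1$. So to derive a contradiction from a length-$\ge 2$ path in $\GIJ$, I must locate an edge of that path (equivalently, of $\GIJ$) which is simultaneously ``bypassed'' by another directed path in $\mathrm{prec}(\AIJ)$; the cleanest route is the minimal-counterexample argument above, using that any directed path of length $\ge 2$ from $v$ to an internal vertex $u$ of $\GIJ$ immediately contradicts minimality of the edge $(v,u)\in E(\GIJ)$, since $(v,u)$ is indeed an edge of $\mathrm{prec}(\AIJ)$ (here one uses $v \in I$ and $u \in \mathrm{prec}(\overline{J})$ with $v < u$). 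Once that bookkeeping is set up, the rest is a short induction on $w - v$.
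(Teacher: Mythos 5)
Your proposal does not close. You correctly discard your first idea (that $\GIJ$ has no directed paths of length $\geq 2$ — indeed it generally does, e.g.\ the running example has the path $1,2,5,8$). But the minimal-counterexample argument you fall back on also has real holes. First, a small but substantive error: you claim that the existence of a length-$\geq 1$ path $P'|_{[v,u]}$ forces $(v,u)\notin E(\GIJ)$; the definition of $\min$ requires a path with at least one intermediate vertex, i.e.\ length $\geq 2$, and even then the conclusion $(v,u)\notin E(\GIJ)$ is not a contradiction because $(v,u)$ was never asserted to be an edge of $\GIJ$ in the first place (only $(v,p_1)$ and $(v,p_1')$ are). Second and more importantly, your ``first divergence / first reconvergence'' reduction only produces a strictly shorter counterexample when the two paths reconverge before $w$; if $P$ and $P'$ are internally disjoint (which is exactly the hard case), there is no reduction and you are stuck. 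Your ``more directly'' alternative then observes that $(v,v_1)$ is a minimal edge and stops, without saying how the induction on $w-v$ actually terminates.

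What is missing is precisely the key move in the paper's proof. Writing $p_1 < q_1$ for the distinct second vertices of $P$ and $Q$, one takes $p_k$ to be the largest vertex on $P$ below $q_1$ (so $p_k < q_1 < p_{k+1}$; equality $p_{k+1}=q_1$ is ruled out because it would give a length-$\geq 2$ path from $v$ to $q_1$, contradicting minimality of the edge $(v,q_1)$). From the edges $(p_k,p_{k+1})$ and $(v,q_1)$ of $\GIJ$ one reads off $p_k\in I$ and $q_1\in\mathrm{prec}(\overline{J})$, so $(p_k,q_1)$ is an edge of $\mathrm{prec}(\AIJ)$, hence there is a directed path from $p_k$ to $q_1$ inside $\GIJ$; concatenating it with $v,p_1,\ldots,p_k$ yields a length-$\geq 2$ path from $v$ to $q_1$ in $\GIJ$, contradicting the minimality of the edge $(v,q_1)$. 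Your write-up has all the raw ingredients (edge tails in $I$, heads in $\mathrm{prec}(\overline{J})$, $\mathrm{prec}(\AIJ)$ contains every compatible $I$-to-$\mathrm{prec}(\overline{J})$ edge), but never assembles this ``jump-over'' step, which is what makes the contradiction land on an edge that is actually in $\GIJ$.
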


\begin{proof}
Suppose there are two distinct directed paths $P$ and $Q$ from $v$ to $w$ in $\GIJ$. 
We can assume without loss of generality that $v$ is the first vertex after which $P$ and $Q$ differ. 
Then the first edges $(v,p_1)$ and $(v,q_1)$ of $P$ and $Q$ are different, and we may assume that $p_1 < q_1$.
Now, the path $P$ is of the form $v, p_1, \ldots, p_k, \ldots, w$ where $p_k$ is the largest vertex in $P$ which is smaller than $q_1$, and by the minimality of $\GIJ$ we have $p_k\neq q_1$.
Since $(v,q_1)$ and $(p_k, p_{k+1})$ are edges in $\GIJ$, then $p_k \in I$ and $q_1\in \mathrm{prec}(\overline{J})$.
But since $p_k < q_1$, then by construction the edge $(p_k,q_1)$ is in $\mathrm{prec}(A(\IJ))$, and therefore there is a path $P'$ in $\GIJ$ from $p_k$ to $q_1$.
However, the concatenation of the paths $v, p_1,\ldots, p_k$ and $P'$ is then a path from $v$ to $q_1$. This contradicts the minimality of $\GIJ$, as $\GIJ$ contains the edge $(v,q_1)$.
\end{proof}

We have just shown that as a directed graph, $\GIJ=\min \mathrm{prec}(\AIJ)$ is acyclic.  
However, Example~\ref{ex:notacyclic} shows that in general, it is possible to start with a graph and obtain a minimal graph that contains cycles (as an un-directed graph).
\begin{example} \label{ex:notacyclic}
Let $H$ be the graph on the vertex set $[4]$ with directed edges $(1,3)$, $(1,4)$, $(2,3)$, and $(2,4)$.  
Then $\min(H) = H$ is not acyclic as an un-directed graph.

This example illustrates why under the uniform projection map of Ceballos et al., certain $\UIJ$ do not project to acyclic root polytopes, as seen in Example~\ref{ex:notARP} for instance.
\end{example}

\begin{example} \label{ex:notARP}
Let $I = \{1,2\}$ and $\overline{J} = \{\overline{3},\overline{4}\}$ so that $\UIJ= \mathrm{conv}\{(\be_1,\be_{\overline{3}})$, $ (\be_1,\be_{\overline{4}})$, $(\be_2, \be_{\overline{3}})$, $(\be_2,\be_{\overline{4}})\}$. 
The map of Ceballos et al. which projects $\bbR^8 \rightarrow \bbR^4$ along the subspace spanned by $\{ (\be_i, \be_{\overline{i}}) \mid i=1,\ldots,4\}$ sends $\UIJ$ to the polytope $\calQ=\mathrm{conv}\{\be_1-\be_3, \be_1-\be_4, \be_2-\be_3, \be_2-\be_4\}$.
This is not an acyclic root polytope for the simple reason that it does not contain the origin. 
Aside from that, $\calQ$ also cannot be described as the convex hull of points $\overline{\Phi_G^+} = \Phi_{A_{3}}^+ \cap \mathrm{cone}(G)$ where $G$ is acyclic. 
If such a $G$ exists, it must contain the edges $(1,3)$, $(2,3)$, $(2,4)$ because they correspond to positive roots which cannot be expressed as positive linear combinations of other lower roots lying in $\calQ$, and if these edges are in $G$, then the acyclic $G$ cannot contain the edge $(1,4)$. 
Observe that the vertex $\be_1-\be_4 \notin \mathrm{cone}(G)$ for this $G$ (see Section~\ref{sec.ARP}). 
\end{example}

For the class of graphs $\mathrm{prec}(\AIJ)$, Lemma~\ref{lem:nonoriented_acyclic} shows that if we forget the orientation on the edges of $\GIJ = \min \mathrm{prec}(\AIJ)$, then $\GIJ$ remains acyclic as an undirected graph.
Hence we can define an acyclic root polytope on the graph $\GIJ$.

\begin{lemma}\label{lem:nonoriented_acyclic}
The graph $\GIJ$ is acyclic as an un-directed graph.
\end{lemma}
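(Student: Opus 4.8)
The plan is to prove the stronger statement that $\GIJ$ is a tree; since the hypothesis that $(\IJ)$ is valid guarantees that $\GIJ$ is connected, it then suffices to locate enough leaves. The first step is an explicit description of the edges of $\GIJ=\min\mathrm{prec}(\AIJ)$. In $\mathrm{prec}(\AIJ)$ there is an edge $u\to v$ precisely when $u\in I$, $v\in\mathrm{prec}(\overline{J})$ and $u<v$, so a directed path of length at least two from $u$ to $v$ exists exactly when some vertex of $B:=I\cap\mathrm{prec}(\overline{J})$ lies strictly between $u$ and $v$. Hence $(u,v)$ is an edge of $\GIJ$ if and only if $u\in I$, $v\in\mathrm{prec}(\overline{J})$, $u<v$, and no element of $B$ lies strictly between $u$ and $v$. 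Accordingly, partition $V(\GIJ)=S\sqcup B\sqcup T$ into the sources $S=I\setminus\mathrm{prec}(\overline{J})$, the set $B$, and the sinks $T=\mathrm{prec}(\overline{J})\setminus I$.

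The heart of the argument is a pair of dual claims describing how $B$ surrounds the sources and sinks, both extracted from the definition of $\mathrm{prec}$ together with validity. First, for a sink $t\in T$, the largest element of $I$ that is smaller than $t$ lies in $B$, and there is no element of $I$ strictly between it and $t$: since $t\notin I$ we must have $t=j$ for some $\overline{j}\in\overline{J}$ with $\overline{t}$ not immediately $\prec_{\IJ}$-preceded by an element of $I$, and following the chain of immediate $\prec_{\IJ}$-predecessors lying in $\overline{J}$ — which must terminate at an element $b\in I$ because $\min(I\sqcup\overline{J})\in I$ — one obtains $b\in\mathrm{prec}(\overline{J})\cap I=B$ with $b<t$ and nothing of $I$ in the interval between $b$ and $t$. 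Dually, for a source $s\in S$, the smallest element of $\mathrm{prec}(\overline{J})$ exceeding $s$ lies in $B$: inspecting the immediate $\prec_{\IJ}$-successor $i'$ of $s$, which must lie in $I$ (otherwise $\mathrm{prec}$ would send an element of $\overline{J}$ to $s$, contradicting $s\in S$) and has nothing of $I\sqcup\overline{J}$ strictly between $s$ and $i'$, an induction moving upward along $I$ gives the claim.

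Granting these two claims, the edge description instantly forces every sink $t$ to have as its unique neighbor the element $b\in B$ described above, and every source $s$ to have as its unique neighbor the smallest element of $\mathrm{prec}(\overline{J})$ above it, which lies in $B$; so every source and every sink is a leaf. The same edge description shows that the elements $b_1<\cdots<b_k$ of $B$ are joined consecutively and only consecutively, so $B$ induces a path. Therefore $\GIJ$ is obtained from a path by repeatedly adjoining pendant vertices, hence is a tree, and in particular acyclic as an undirected graph. The main obstacle is the bookkeeping in the two dual claims: correctly unwinding the intricate definition of $\mathrm{prec}$ and exploiting the validity hypothesis to pin down exactly which vertex of $B$ each source and each sink attaches to; everything after that is a direct consequence of the edge description.
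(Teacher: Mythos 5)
Your proof is correct, but it takes a genuinely different and more constructive route than the paper. The paper argues by contradiction: it supposes $\GIJ$ has a cycle, locates a descent $v_k>v_{k+1}$ along one of the two boundary arcs between the extreme vertices of the cycle (here Lemma~\ref{lem:uniquepath} is used to guarantee one arc has a descent), and then derives a contradiction with the minimality of $\GIJ$ by a case analysis on whether $v_k$ lies in $\mathrm{prec}(\overline{J})\setminus I$ or in $I\cap\mathrm{prec}(\overline{J})$. You instead give an explicit structural description of $\GIJ$: writing $B=I\cap\mathrm{prec}(\overline{J})$, you show that $(u,v)$ is an edge of $\GIJ$ precisely when $u\in I$, $v\in\mathrm{prec}(\overline{J})$, $u<v$, and no element of $B$ separates them, and then deduce that $\GIJ$ is the path $b_1-\cdots-b_k$ on $B$ with each source and each sink attached as a pendant. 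This is correct (and it matches the running example: for $I=\{1,2,3,5,9\}$, $\overline{J}=\{\overline{2},\overline{7},\overline{8},\overline{9}\}$ one gets $B=\{2,5,9\}$ and the pendants $1\mathbin{-}2$, $3\mathbin{-}5$, $8\mathbin{-}5$, exactly the graph in Figure~\ref{fig:G(I,J)}). Your approach buys more: it describes $\GIJ$ completely, and it actually reproves connectivity (Lemma~\ref{lem:Gconn}, which in the paper comes \emph{after} Lemma~\ref{lem:nonoriented_acyclic}), since a path with pendants is automatically connected once $B\neq\emptyset$ and every source/sink has a neighbor, both of which follow from validity. For that reason the opening sentence, which invokes connectivity of $\GIJ$ as a hypothesis, is unnecessary and slightly circular relative to the paper's ordering of lemmas; you should instead note that $B\neq\emptyset$ (validity forces a transition from $I$ to $\overline{J}$ in the $\prec_{\IJ}$ order) and that the existence of the pendant attachment points follows from $\min(I\sqcup\overline{J})\in I$ and $\max(I\sqcup\overline{J})\in\overline{J}$, after which ``path plus pendants'' is a tree with no external input. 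One small typo: ``$\overline{t}$'' in the sink analysis should read $\overline{j}$.
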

\begin{proof}
Suppose there is a cycle in $G=\GIJ$.
Let $v$ and $w$ be the smallest and largest vertices in the cycle respectively.
We can partition the cycle into two sequences of edges, each beginning at $v$ and ending at $w$.
By Lemma~\ref{lem:uniquepath} we know that these sequences cannot both form directed paths from $v$ to $w$. 
Thus at least one of the sequences is of the form $v=v_0, v_1, v_2, \ldots, v_r=w$ where $v_k > v_{k+1}$ for some $k\geq1$. 

Choose $k$ such that $v_{k+1}< v_k$.
Recall that if $(i,j)\in E(G)$, then $i\in I$ and $j\in \mathrm{prec}(\overline{J})$.
So by assumption, $v_{k+1}\in I$ and $v_k \in \mathrm{prec}(\overline{J})$.

Now consider two cases: either $v_k \in \mathrm{prec}(\overline{J})\backslash I$, or $v_k \in I \cap \mathrm{prec}(\overline{J})$.
First, suppose $v_k \in \mathrm{prec}(\overline{J})\backslash I$.
A consecutive string of vertices $z_1 < \cdots < z_p$ in $G$ which are in $\mathrm{prec}(\overline{J})\backslash I$ must be immediately preceded by a vertex $y \in I\cap \mathrm{prec}(\overline{J})$, because otherwise, $y$ immediately precedes $z_1$ in $\AIJ$ and the quotient map $\mathrm{prec}$ would have identified this pair of vertices in $\mathrm{prec}(\AIJ)$ and in $G$.

Note that if $u\in V(G)$ is a vertex that satisfies $v_{k+1} < u < v_k$, then $u\notin I \cap \mathrm{prec}(\overline{J})$, for if it is, then $(v_{k+1}, u)$ and $(u, v_k)$ are edges in $G$, and the positive root corresponding to $(v_{k+1}, v_k)$ can be written as a linear combination of the roots corresponding to $(v_{k+1}, u)$ and $(u, v_k)$, which is not possible because $G$ is a minimal graph.

Combining the last two observations, we conclude that $v_{k+1}\in I \cap \mathrm{prec}(\overline{J})$.
Then there exists a vertex $w$ in $\mathrm{prec}(\AIJ)$ where $w<v_{k+1}$ and $(w,v_{k+1})$ and $(w, v_k)$ are edges in $\mathrm{prec}(\AIJ)$.
Since $(v_{k+1}, v_k)$ is also an edge in $\mathrm{prec}(\AIJ)$, this contradicts the assumption that $G$ is a minimal graph.

Second, suppose $v_k \in I \cap \mathrm{prec}(\overline{J})$.
Then there exists a vertex $x$ in $\mathrm{prec}(\AIJ)$ where $v_k< x$ and $(v_{k+1},x)$ and $(v_k,x)$ are edges in $\mathrm{prec}(\AIJ)$.
Since $(v_{k+1}, v_k)$ is also an edge in $\mathrm{prec}(\AIJ)$, this contradicts the assumption that $G$ is a minimal graph.

Therefore, $G$ is acyclic as an undirected graph.
\end{proof}

\begin{lemma}\label{lem:Gconn}
The graph $\GIJ$ is connected.
\end{lemma}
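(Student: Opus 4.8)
The plan is to show $\GIJ$ is connected by reducing to the connectivity of $\mathrm{prec}(\AIJ)$, since passing from $\mathrm{prec}(\AIJ)$ to its minimal graph $\GIJ$ only removes edges $(i,j)$ that are ``shortcuts'' of existing directed paths, and removing such a shortcut cannot disconnect a graph. So first I would argue that $\mathrm{prec}(\AIJ)$ is connected, and then argue that the minimalization step preserves connectivity.

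For connectivity of $\mathrm{prec}(\AIJ)$: recall that $\AIJ$ has vertex set $I\sqcup\overline{J}$ and edge set $\{(i,\overline{j})\mid i\prec_{\IJ}\overline{j},\ i\in I,\ \overline{j}\in\overline{J}\}$, and that $\mathrm{prec}$ is a quotient that only identifies pairs of vertices, so a quotient of a connected graph is connected; hence it suffices to show $\AIJ$ is connected. Here I would use the validity hypothesis: $\min(I\sqcup\overline{J})\in I$, call it $i_0$, and $\max(I\sqcup\overline{J})\in\overline{J}$, call it $\overline{j}_0$. Since $i_0$ is the $\prec_{\IJ}$-minimum it is below every element of $\overline{J}$, so $(i_0,\overline{j})\in E(\AIJ)$ for every $\overline{j}\in\overline{J}$; thus $i_0$ is adjacent to all of $\overline{J}$. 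Symmetrically, $\overline{j}_0$ is above every element of $I$, so $(i,\overline{j}_0)\in E(\AIJ)$ for every $i\in I$; thus $\overline{j}_0$ is adjacent to all of $I$. Therefore every vertex of $\AIJ$ is within distance two of $i_0$ (through $\overline{j}_0$ if it lies in $I$, directly if it lies in $\overline{J}$), so $\AIJ$ is connected. Consequently $\mathrm{prec}(\AIJ)$ is connected.

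Next, for the passage to the minimal graph: by the definition of $\min(H)$ given just before Definition~\ref{def:GIJ}, we obtain $\GIJ$ from $\mathrm{prec}(\AIJ)$ by deleting, one at a time, edges $(i,j)$ for which there is a directed path $i,i_1,\dots,i_k,j$ with $k\ge 1$ \emph{in the current graph}. (One should be slightly careful that the definition is ``remove every edge $(i,j)$ for which such a path exists in $H$'', but whether we delete all such edges simultaneously or sequentially, the key point is the same.) Deleting an edge $e=(i,j)$ that is subtended by a directed path $Q$ from $i$ to $j$ not using $e$ cannot disconnect the graph: any walk that used $e$ can be rerouted along $Q$, so the two endpoints $i$ and $j$ remain in the same component, and no other pair of vertices is affected. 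Iterating this over all deleted edges, $\GIJ$ remains connected.

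The main obstacle — really the only subtle point — is making the sequential deletion argument airtight: one must ensure that when an edge $(i,j)$ is slated for removal because of a path in the original $\mathrm{prec}(\AIJ)$, enough of that path survives in the intermediate graph to still certify that $i$ and $j$ stay connected. The cleanest way around this is to observe directly that in $\GIJ$ itself the vertices $i_0=\min(I\sqcup\overline{J})$ and $\overline{j}_0=\max(I\sqcup\overline{J})$ (or rather their images under $\mathrm{prec}$) are still connected to everything: for each edge $(i_0,\overline{j})$ of $\AIJ$ that was deleted in forming $\GIJ$, its deletion was witnessed by a directed path, and one can show by downward induction on the gap that $i_0$ still reaches $\mathrm{prec}(\overline{j})$ in $\GIJ$; likewise upward for $\overline{j}_0$. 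Alternatively, and more simply, note that $\GIJ$ is the minimal graph generating the same cone as $\mathrm{prec}(\AIJ)$ (this is exactly the content of the remark that $\GIJ$ ``aligns with the previous definition of a minimal graph in Section~\ref{sec.ARP}''), and two graphs generating the same cone in type $A$ have the same set of reachability pairs $\{(v,w): v$ reaches $w\}$; since $\mathrm{prec}(\AIJ)$ has $i_0$ reaching every vertex, so does $\GIJ$, giving connectivity at once. I would present whichever of these two routes is shortest given the conventions already fixed in the paper.
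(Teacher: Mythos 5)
Your proof takes essentially the same three-stage route as the paper's: first establish that $\AIJ$ is connected, then deduce that the quotient $\mathrm{prec}(\AIJ)$ is connected, and finally argue that passing to the minimal graph only removes ``shortcut'' edges and so cannot disconnect the graph. Your argument for the first stage is cleaner than the paper's: you observe that $i_0 = \min(I\sqcup\overline{J})\in I$ is adjacent to all of $\overline{J}$ and $\overline{j}_0 = \max(I\sqcup\overline{J})\in\overline{J}$ is adjacent to all of $I$, so every vertex is within distance two of $i_0$; the paper instead runs through four cases of vertex pairs, which amounts to the same observation but presented less compactly.

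The one place you spend extra care is the final step, where you flag the possibility that deleting several shortcut edges could, in principle, destroy the witnessing paths for one another. The paper glosses over this (it says removing one shortcut edge preserves connectivity ``and it follows that'' the minimal graph is connected). Your concern is legitimate but benign: each deleted edge $(i,j)$ is certified by a directed path $i<i_1<\cdots<i_k<j$ whose edges all have strictly smaller span $j-i$, so a downward induction on span shows the witnessing paths survive in $\GIJ$. Your first proposed fix is exactly this and works. Your second proposed fix (via reachability in the cone) has a small imprecision: in $\mathrm{prec}(\AIJ)$, the vertex $i_0$ does not reach every vertex in the \emph{directed} sense (it does not reach other elements of $I$ that are not in $\mathrm{prec}(\overline{J})$), so the statement ``$i_0$ reaches every vertex'' should be replaced by the undirected-connectivity statement that every vertex is joined to $i_0$ by a zigzag through $\overline{j}_0$. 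With that emendation the argument goes through. Overall your proof is correct and matches the paper's strategy, while being somewhat more careful about the transitive-reduction step.
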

\begin{proof}
First, we can see that the graph $\AIJ$ is connected by checking cases.
\begin{enumerate}
    \item Case $i\prec_{\IJ} \overline{j}$ with $i\in I$ and $\overline{j} \in \overline{J}$.
    The edge $(i,\overline{j})$ is in $\AIJ$ by definition.
    
    \item Case $i_1 \prec_{\IJ} i_2$ with $i_1, i_2\in I$. 
    For every $i\in I$, there is an edge $(i, \max{(I\sqcup\overline{J}}))$.  
    Thus $i_1, \max{(I\sqcup\overline{J}}), i_2$ is an (undirected) path in $\AIJ$ between $i_1$ and $i_2$.
    
    \item Case $\overline{j}_1\prec_{\IJ} \overline{j}_2$ with $\overline{j}_1,\overline{j}_2\in \overline{J}$.
    For every $\overline{j}\in \overline{J}$, there is an edge $(\min(I\sqcup\overline{J}), j)$.  
    Thus $j_1, \min(I\sqcup\overline{j}), j_2$ is an (undirected) path in $\AIJ$ between $j_1$ and $j_2$.
    
    \item Case $\overline{j} \prec_{\IJ} i$ with $\overline{j}\in \overline{J}$ and $i\in I$.
    Similar to the previous two cases, $\overline{j}, \min(I\sqcup \overline{J}), \max(I\sqcup \overline{J}), i$ is an undirected path in $\AIJ$ between $\overline{j}$ and $i$.
\end{enumerate}

Since $\AIJ$ is connected, then its quotient $\mathrm{prec}(\AIJ)$ is also connected.
Finally, an edge $e=(i,j)\in \mathrm{prec}(\AIJ)$ is removed from $\mathrm{prec}(\AIJ)$ to form $G= \min \mathrm{prec}(\AIJ)$ if and only if there exists a directed path from $i$ to $j$ in $\mathrm{prec}(\AIJ)$ of length at least two, so removing $e$ from $\mathrm{prec}(\AIJ)$ does not disconnect it, and it follows that $\GIJ$ is connected.
\end{proof}

Combining Lemmas~\ref{lem:nonoriented_acyclic} and~\ref{lem:Gconn}, we see that $\GIJ$ is in fact, a tree.

\subsection{Proof of the main theorem}\label{sec.main}
In this section we prove the main result, Theorem~\ref{thm.main}.
We begin by explaining how the preliminary material of Section~\ref{sec.preliminaries} applies in the context of the polytope $\UIJ$.
The reader may find it helpful to refer often to Figure~\ref{fig:G(I,J)}, as the four graphs $A=\AIJ$, $\mathrm{prec}(A)$, $G=\GIJ =\min \mathrm{prec}(A)$, and $\hatG$, play a central role in what follows.

For a valid pair $(\IJ)\in [n]\times [\overline{n}]$, suppose the graph $G$ has $m$ edges, so that the partially augmented graph $\hatG$ has $\widehat{m}=m+|I|+|\overline{J}|$ edges.
Let $\{\be_e \mid e \in E(G)\}$ be the standard orthonormal basis for $\bbR^m$ and let $\{\be_e \mid e \in E(\hatG)\}$ be the standard orthonormal basis for $\bbR^{\widehat{m}}$.
Recall from Section~\ref{sec.FP} that the vertices in a flow polytope correspond with the routes in its underlying graph. 
We have the flow polytope
$$\calF_{\hatG} = \mathrm{conv}\{ \hbox{routes $(s, v_1, \ldots, v_\ell, t)$ in $\hatG$} \} \subseteq \bbR^{\widehat{m}}.$$
The dimension of $\calF_{\hatG}$ is $|E(\hatG)|- |V(\hatG)|+1$.

Let $\pi_1 : \bbR^{\widehat{m}} \rightarrow \mathbb{R}^{m}$ be the linear map defined on the standard basis $\{ \be_{e} \mid e\in E(\hatG) \}$ for $\bbR^{\widehat{m}}$ by
$$
\pi_1(\be_e) = \begin{cases}
\mathbf{0}, &\hbox{if $e = (s,i)$ or $(j,t)$},\\
\be_{e}, &\hbox{otherwise}.
\end{cases}
$$
In other words, $\pi_1$ is the projection onto the coordinates associated with the inner edges of $\hatG$. 
Define the polytope $\calS(G)$ to be the image of $\calF_{\hatG}$ under $\pi_1$.
Note that since $(\IJ)$ is a valid pair, then $I\cap \mathrm{prec}(\overline{J}) \neq \emptyset$, and moreover every route in $\hatG$ of the form $(s,v,t)$ with $v\in I \cap \mathrm{prec}(\overline{J})$, projects to $\mathbf{0}$ under $\pi_1$.
The polytope $\calS(G)$ can equivalently be defined as
$$\calS(G) =\mathrm{conv}\{\text{paths } (v_1, \ldots, v_\ell) \text{ in } G \mid v_1\in I \text{ and } v_\ell \in \mathrm{prec}(\overline{J})  \} \subseteq \bbR^m,$$
and in this definition we include the empty path in $G$ so that $\mathbf{0}\in \calS(G)$.  

Next, recall that $A=\AIJ$ is the graph on $I\cup \overline{J}$ with the edge set $ \{(i,\overline{j}) \mid i \prec \overline{j} \} \subseteq  I \times \overline{J} $. 
It follows that we can express
$$\UIJ = \mathrm{conv}\{(\be_i,\be_{\overline{j}}) \mid (i,\overline{j}) \in E(A) \} \subseteq \bbR^{2n}.$$
The dimension of $\UIJ$ is $|I|+|\overline{J}|-2$.

By Lemma~\ref{lem:nonoriented_acyclic}, $G$ is acyclic as an undirected graph, so the acyclic root polytope defined on $G$ in Section~\ref{sec.ARP} is
$$\calP(G) = \mathrm{conv}\{ \mathbf{0}, \be_i -\be_j \mid (i,j) \in E(\mathrm{prec}(A))\} \subseteq \bbR^n,$$
where $\mathrm{prec}(A)$ is the quotient graph of $A$ defined in Section~\ref{sec.GIJ}.

Let $\pi_2 :\bbR^{2n}\rightarrow \bbR^n$ be the linear map defined on the standard basis of $\bbR^{2n}$ as follows. 
For $i=1,\ldots, n$, let $\pi_2((\be_i,\mathbf{0})) = \be_i$, and let
$$\pi_2((\mathbf{0}, \be_{\overline{j}})) =
\begin{cases}
-\be_{\mathrm{prec}(\overline{j})}, & \hbox{if } \overline{j}\in\overline{J},\\
-\be_j, & \hbox{if } \overline{j}\notin \overline{J}.
\end{cases}
$$
Let $\mathbb{A}_{\IJ}$ denote the affine span $\mathrm{aff}\{(\be_i,\be_{\overline{j}}) \mid (i,\overline{j}) \in E(A) \},$ so that $\UIJ\subseteq \bbA_{I,\overline{J}}$.
Now, the restriction $\pi_2:\mathbb{A}_{I,\overline{J}} \rightarrow \mathbb{R}^n$ gives $\pi_2(\be_{i}, \be_{\overline{j}} ) = \be_i - \be_{\mathrm{prec}(\overline{j})}$ for each $(i,\overline{j}) \in E(A)$, and the image of $\UIJ$ under $\pi_2$ is $\calP(G)$. 

Recall that $\bbR^{\widehat{m}} = \mathrm{span}\{ \be_{e} \mid e\in E(\hatG) \}$.
We define a linear map $\varphi_1: \bbR^{\widehat{m}} \rightarrow \bbR^{2n} $ on the standard basis of $\bbR^{\widehat{m}}$ by
$$\varphi_1(\be_{e}) = \begin{cases}
(\be_{i}, \mathbf{0}), & \hbox{if } e=(s,i), \\
(\mathbf{0}, \be_{\overline{j}}), & \hbox{if } e=(\mathrm{prec}(\overline{j}),t), \\
(\mathbf{0},\mathbf{0}), & \hbox{otherwise,}
\end{cases}$$
where $\mathbf{0}\in\bbR^n$.
Whereas $\pi_1$ was the projection onto the coordinates associated with the inner edges of $\hatG$, $\varphi_1$ is the map that sends the coordinates associated with the inner edges of $\hatG$ to zero.

Finally, recall that $\bbR^m = \mathrm{span}\{\be_e \mid e\in E(G) \}$.
We define a linear map $\varphi_2: \mathbb{R}^m \rightarrow \mathbb{R}^n$ on the standard basis of $\bbR^m$ by $\varphi_2(\be_{(i,j)}) = \be_i - \be_j$ for $(i,j) \in E(G)$.

M\'esz\'aros showed in~\cite[Theorem 4.4]{Mes15} that $\pi_1$ projects $\calF_{\hatG}$ onto $\calS(G)$, and $\calS(G)$ is affinely equivalent to $\calP(G)$ via $\varphi_2$. 
We will show that $\varphi_2$ is actually an integral equivalence; two integral polytopes $\calP\subseteq \bbR^a$ and $\calQ\subseteq\bbR^b$ are {\em integrally equivalent} if there exists an affine transformation $\varphi:\bbR^a\rightarrow\bbR^b$ whose restriction to $\calP$ is a bijection $\varphi:\calP\rightarrow \calQ$ that preserves the lattice.  That is, $\varphi$ is a bijection between $\bbZ^a \cap \mathrm{aff}(P)$ and $\bbZ^b \cap \mathrm{aff}(Q)$.
Integrally equivalent polytopes have the same Ehrhart polynomial, and hence the same normalized volume.

We are now ready to prove the main theorem.
\main*
\begin{proof}
We first check that $\varphi_1$ and $\varphi_2$ are integral equivalences.

Vertices in the flow polytope $\calF_{\hatG}$ correspond to routes in $\hatG$, and a route in $\hatG$ is of the form $R=(s, v_1, v_2, \ldots, v_\ell, t)$ for some $v_1=i\in I$, $v_\ell =\precj \in \precJ$ and $k \in V(G)$ for $k=1,\ldots, \ell$.
It follows from Lemma~\ref{lem:uniquepath} that there is a unique path from $v_1=i$ to $v_\ell=\precj$, so $R$ is completely determined by the two edges $(s,i)$ and $(\mathrm{prec}(\overline{j}),t)$.

The map $\varphi_1$ sends the routes of $\hatG$ to the vertices $(\be_{i}, \be_{\overline{j}})$ of $\UIJ$.
In particular, $\varphi_1$ is a bijection between the vertices of $\calF_{\hatG}$ and $\UIJ$, and it extends to a bijection between the polytopes as they are each the convex hull of their vertices.  
By Lemmas~\ref{lem:nonoriented_acyclic} and~\ref{lem:Gconn}, $G$ is a tree, so $|E(G)|=|V(G)|-1$, and hence
\begin{align*}
\dim \calF_{\hatG} 
&= |E(\hatG)| - |V(\hatG)| +1
= |E(G)| + |I| + |\overline{J}| - |V(G)| - 1
= |I| + |\overline{J}| -2 \\
&= \dim \UIJ.
\end{align*}
Therefore $\varphi_1$ maps $\calF_{\hatG}\subseteq [0,1]^{\widehat{m}}$ into $\UIJ\subseteq [0,1]^{2n}$ while preserving the dimension of the polytope, so it also preserves the respective lattices intersected with the affine spans of the polytopes.  Thus $\varphi_1$ is an integral equivalence.

M\'esz\'aros showed in~\cite[Theorem 4.4]{Mes15} that $\varphi_2$ is a linear map that restricts to a bijection between $\calS(G)$ and $\calP(G)$, so it remains to check that $\varphi_2$ preserves the lattice.
Note that the edges of $G$ form an orthonormal basis for $\mathbb{R}^{|E(G)|}$, and so $\dim(\calS(G)) = |E(G)|$.
Since the dimension of $\calP(G)$ is the number of edges in the minimal graph which generates it, then it also has dimension $|E(G)|$.
Therefore $\varphi_2$ is a dimension-preserving linear map between $[0,1]$-polytopes, and hence preserves the lattice. 

Finally, we check that the diagram commutes. 
By the linearity of the maps, it suffices to check that the square commutes for the vertices of $\calF_{\hatG}$. 
Let $x$ be a vertex of $\calF_{\hatG}$ that corresponds with the route $R=(s,v_1,\ldots, v_\ell, t)$ in $\hatG$ for some $v_1=i\in I$ and $v_\ell = \precj \in \precJ$.  
Then 
$$\pi_2(\varphi_1(x)) 
= \pi_2((\be_i,\be_{\overline{j}})) 
= \be_i-\be_{\precj}.$$ 
On the other hand, 
$$\varphi_2(\pi_1(x))
= \varphi_2\left(\sum_{k=1}^{\ell-1} \be_{(v_k,v_{k+1})} \right) 
= \sum_{k=1}^{\ell-1} \be_{v_k} - \be_{v_{k+1}}
= \be_i - \be_{\precj}.
$$
Thus the diagram commutes.
\end{proof}

Theorem~\ref{thm.main} is the key to obtaining subdivisions of $\calU_{I,\overline{J}}$ with the subdivision algebra. 
Since the flow polytope $\calF_{\hatG}$ is integrally equivalent to $\UIJ$ via $\varphi_1$, then the reductions of the monomial $M_G= \prod_{(i,j)\in E(G)} x_{ij}$ that encode subdivisions of $\calF_{\hatG}$ (Lemma~\ref{lem:reductionLemma}) also encode subdivisions of $\UIJ$ via $\varphi_1$, and we have the following corollary.

\begin{corollary}
\label{cor:subdivUIJ}
Reductions of the monomial $M_{G(I,\overline{J})}$ in the subdivision algebra encode subdivisions of $\calU_{I,\overline{J}}$. \qed
\end{corollary}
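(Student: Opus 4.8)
The plan is to obtain the corollary as an immediate consequence of Theorem~\ref{thm.main} together with the reduction lemma for flow polytopes, Lemma~\ref{lem:reductionLemma}. The mechanism is that $\varphi_1\colon\calF_{\hatG}\to\UIJ$ is an integral equivalence, so it transports any polyhedral subdivision of $\calF_{\hatG}$ cell by cell to a polyhedral subdivision of $\UIJ$ of the same combinatorial type; combining this with the fact that reductions of $M_G$ record a triangulation of $\calF_{\hatG}$ gives the statement.

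First I would recall the remark following Lemma~\ref{lem:reductionLemma}: a reduction tree $R_G$ --- equivalently, a reduced form of the monomial $M_G=\prod_{(i,j)\in E(G)}x_{ij}$ --- encodes a triangulation of $\calF_{\hatG}$, with the nodes of $R_G$ indexing the faces of the triangulation and the full-dimensional leaves indexing its maximal simplices. Since $\GIJ$ is a tree by Lemmas~\ref{lem:nonoriented_acyclic} and~\ref{lem:Gconn}, the monomial $M_G$ is squarefree and this applies without modification. Then I would push this subdivision forward along $\varphi_1$: because $\varphi_1$ is linear and restricts to a lattice-preserving bijection $\calF_{\hatG}\to\UIJ$, the images $\varphi_1(\calF_{\hatG_N})$ of the cells are lattice polytopes in $\UIJ$ of unchanged dimension, and their intersection relations are preserved since $\varphi_1$ is a bijection on $\calF_{\hatG}$. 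Equivalently, one argues one reduction at a time: Lemma~\ref{lem:reductionLemma} dissects $\calF_{\hatG}$ as $\calF_{\hatG_1}\cup\calF_{\hatG_2}$ with $\calF_{\hatG_1}\cap\calF_{\hatG_2}=\calF_{\hatG_3}$ a common facet and disjoint interiors, and applying the integral equivalence $\varphi_1$ carries this to the corresponding dissection of $\UIJ$; iterating down $R_G$ produces the triangulation of $\UIJ$ indexed by the leaves of $R_G$, i.e.\ by the monomials of a reduced form of $M_G$.

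I do not anticipate a genuine obstacle; the only points needing a sentence of care are (i) spelling out what ``encode subdivisions'' means here, namely that the cells of the subdivision of $\UIJ$ are in bijection with the nodes of $R_G$ (the full-dimensional leaves giving a triangulation), and (ii) observing that the bottom edge of the square of Theorem~\ref{thm.main} yields the same conclusion through the root polytope: $\pi_2$ maps $\UIJ$ onto $\calP(G)$ and, by the reduction lemma for root polytopes (Lemma~\ref{lem:reductionLemmaARP}), the same reduced forms of $M_G$ triangulate $\calP(G)$, so the flow-polytope and root-polytope descriptions of the subdivision algebra for $\UIJ$ agree via the commuting diagram.
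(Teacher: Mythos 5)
Your proposal matches the paper's argument exactly: the corollary is obtained immediately from Theorem~\ref{thm.main} by transporting the triangulation of $\calF_{\hatG}$ encoded by reductions of $M_G$ (Lemma~\ref{lem:reductionLemma}) across the integral equivalence $\varphi_1$. Your closing observation about the alternative route through $\pi_2$ and Lemma~\ref{lem:reductionLemmaARP} is also what the paper does, presenting it as a ``second method'' in the discussion immediately following the corollary.
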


A second method to prove Corollary~\ref{cor:subdivUIJ} is through the associated acyclic root polytope, and this was the approach originally hinted at by Ceballos et al.
We explain this now.

A {\em cone point} of a triangulation (or simplicial complex) is a vertex which appears in every facet of the triangulation (or simplicial complex). 

\begin{lemma}\label{lem.conepoints1}
Let $(\IJ)$ be a valid pair and let $G=\GIJ$.
For $v\in I\cap \mathrm{prec}(\overline{J})$, the route $(s,v,t)$ in $\hatG$ is a cone point of any triangulation of $\calF_{\hatG}$ that is obtainable by the subdivision algebra.
\end{lemma}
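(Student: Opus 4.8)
The plan is to unwind the definitions and observe that such a route is built into the very construction of the simplices appearing in a subdivision-algebra triangulation. By iterating the reduction lemma for flow polytopes (Lemma~\ref{lem:reductionLemma}), a triangulation of $\calF_{\hatG}$ obtained from the subdivision algebra is exactly the one whose top-dimensional simplices are the polytopes $\calF_{\hatG_N}$, as $G_N$ ranges over the full-dimensional leaves of a reduction tree $R_G$ (equivalently, over the non-$\beta$ monomials of a reduced form of $M_G$). So it suffices to show that the route $(s,v,t)$ is a vertex of $\calF_{\hatG_N}$ for every such $G_N$.

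First I would record the combinatorial fact that, for a valid pair $(\IJ)$, a vertex $v\in I\cap\mathrm{prec}(\overline{J})$ is precisely an \emph{inner} vertex of $G=\GIJ$: by the description of $\mathrm{prec}(\AIJ)$ in Section~\ref{sec.GIJ}, the sources of $G$ are the elements of $I\setminus\mathrm{prec}(\overline{J})$ and the sinks are the elements of $\mathrm{prec}(\overline{J})\setminus I$. Consequently both augmented edges $(s,v)$ and $(v,t)$ lie in $E(\hatG)$, so $R=(s,v,t)$ is a length-two route of $\hatG$, and hence a vertex of $\calF_{\hatG}$.

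Next I would invoke Definition~\ref{def:F_G_i}: for \emph{every} node $G_N$ of $R_G$, the polytope $\calF_{\hatG_N}$ is by definition the convex hull of the routes $\mu(e)$ with $e\in E(G_N)$ together with all length-two routes $(s,i),(i,t)$ of $\hatG$. Since $R$ has only two edges it cannot be of the form $\mu(e)$ (those routes have at least three edges), but it is explicitly included in the second family, taking $i=v$. Reductions act only on edges of $G$ with both endpoints inner and never create or destroy the boundary edges $(s,i)$ or $(i,t)$, so this membership persists unchanged throughout the reduction tree. Therefore $R$ is a vertex of $\calF_{\hatG_N}$ for every full-dimensional leaf $G_N$, hence a vertex of every facet of the triangulation, i.e.\ a cone point.

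The argument is essentially bookkeeping, so there is no single hard step; the only point requiring care is the interpretation of ``triangulation obtainable by the subdivision algebra'' — one must be sure that every such triangulation genuinely arises, via Lemma~\ref{lem:reductionLemma}, as the collection of $\calF_{\hatG_N}$ over the full-dimensional leaves of some reduction tree $R_G$, rather than through some more exotic sequence of dissections. Once this identification is in place, the explicit inclusion furnished by Definition~\ref{def:F_G_i} closes the argument.
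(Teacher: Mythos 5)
Your argument is correct and takes the same route as the paper's: both boil down to the observation that, by Definition~\ref{def:F_G_i}, every node $G_N$ of a reduction tree for $G$ has $\calF_{\hatG_N}$ defined so as to always include the length-two routes $(s,i),(i,t)$, and in particular $(s,v,t)$; hence that route appears in every full-dimensional leaf, i.e.\ in every facet. Your write-up merely makes explicit what the paper leaves as ``by definition.''

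One small caution on your intermediate claim that $v\in I\cap\mathrm{prec}(\overline{J})$ is ``precisely an inner vertex of $G$'': this is not quite right in general. If $\max(I\sqcup\overline{J})=\overline{j}_0$ is immediately preceded by $i_0\in I$, then $i_0=\mathrm{prec}(\overline{j}_0)\in I\cap\mathrm{prec}(\overline{J})$ is the largest vertex of $G=\GIJ$ and has no outgoing edges, so it is a sink, not an inner vertex (e.g.\ $v=9$ in the running example of Figure~\ref{fig:G(I,J)}). What saves the argument — and what the paper itself relies on, per Example~\ref{ex:running} and the figure — is that $\hatG(\IJ)$ is built with edges $(s,i)$ for every $i\in I$ and $(\mathrm{prec}(\overline{j}),t)$ for every $\overline{j}\in\overline{J}$, so $(s,v)$ and $(v,t)$ are both present for all $v\in I\cap\mathrm{prec}(\overline{J})$ regardless. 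With that correction, your conclusion stands and the proof goes through exactly as the paper's does.
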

\begin{proof}
Let $\mathscr{T}$ be a triangulation of $\calF_{\hatG}$ that is obtained by subdividing the monomial $M_G$ in some order.
Suppose the graph $H$ is a full-dimensional leaf in the reduction tree for $\mathscr{T}$.
Since $v\in I\cap \mathrm{prec}(\overline{J})$, then $(s,v,t)$ is a route in the partially augmented graph $\widehat{H}$ by definition.
This holds for every full-dimensional leaf $H$, hence $(s,v,t)$ is a vertex in every facet of $\mathscr{T}$.
\end{proof}

Via the integral equivalence $\varphi_1$, Lemma~\ref{lem.conepoints1} is equivalent to the following.
\begin{corollary}\label{lem.conepoints2}
Let $i\in I$ and $\overline{j}\in \overline{J}$ such that $\mathrm{prec}(\overline{j})=i \in I$.
The vertex $(\be_i, \be_{\overline{j}})$ is a cone point of any triangulation of $\UIJ$ that is obtainable by the subdivision algebra.
\qed
\end{corollary}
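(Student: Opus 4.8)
The plan is to transport the statement of Lemma~\ref{lem.conepoints1} across the integral equivalence $\varphi_1$ of Theorem~\ref{thm.main}. Since $\mathrm{prec}(\overline{j}) = i$, the vertex $i$ lies in $I \cap \mathrm{prec}(\overline{J})$, so Lemma~\ref{lem.conepoints1} applies with $v = i$: the route $(s,i,t)$ in $\hatG$ is a cone point of every triangulation of $\calF_{\hatG}$ obtainable from the subdivision algebra. The remaining work is to push this fact through $\varphi_1$.

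Next I would recall from the proof of Theorem~\ref{thm.main} that $\varphi_1$ sends the vertex of $\calF_{\hatG}$ corresponding to the route $(s,i,t)$ to $\varphi_1(\be_{(s,i)} + \be_{(i,t)}) = (\be_i,\mathbf{0}) + (\mathbf{0},\be_{\overline{j}}) = (\be_i,\be_{\overline{j}})$, which is a vertex of $\UIJ$. Because $\varphi_1$ is a lattice-preserving affine bijection from $\calF_{\hatG}$ onto $\UIJ$ carrying vertices to vertices, it induces a bijection between the faces of any triangulation of $\calF_{\hatG}$ and those of the corresponding triangulation of $\UIJ$; in particular it preserves the property of being a facet and the property of being a vertex lying in every facet. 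Moreover, by Lemma~\ref{lem:reductionLemma} and Corollary~\ref{cor:subdivUIJ}, the subdivision-algebra triangulations of $\calF_{\hatG}$ and those of $\UIJ$ are indexed by the \emph{same} reduction trees of $G = \GIJ$ (the reductions of the monomial $M_G$ are literally the same in both cases), so $\varphi_1$ matches the subdivision-algebra triangulations of $\calF_{\hatG}$ with the subdivision-algebra triangulations of $\UIJ$.

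Finally I would combine the pieces: given a subdivision-algebra triangulation $\mathscr{T}$ of $\UIJ$, it equals $\varphi_1(\mathscr{T}')$ for a subdivision-algebra triangulation $\mathscr{T}'$ of $\calF_{\hatG}$; by Lemma~\ref{lem.conepoints1} the route $(s,i,t)$ is in every facet of $\mathscr{T}'$, hence its image $(\be_i,\be_{\overline{j}})$ is in every facet of $\mathscr{T}$, i.e.\ it is a cone point. I do not anticipate a genuine obstacle; the only point requiring a line of care is making explicit that the triangulation correspondence furnished by $\varphi_1$ restricts to a correspondence of subdivision-algebra triangulations, which is immediate from the fact that both are driven by the same reductions of $M_{G(\IJ)}$.
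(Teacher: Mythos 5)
Your proposal is correct and takes essentially the same approach as the paper: the paper states the corollary as an immediate consequence of Lemma~\ref{lem.conepoints1} transported through the integral equivalence $\varphi_1$, exactly as you argue. The computation $\varphi_1(\be_{(s,i)}+\be_{(i,t)})=(\be_i,\be_{\overline{j}})$ and the observation that the subdivision-algebra triangulations on both sides arise from the same reductions of $M_{\GIJ}$ are precisely the content the paper treats as self-evident (hence the bare \texttt{\textbackslash qed}).
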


The vertices of $\UIJ$ that are mapped to $\mathbf{0}$ in $\calP(G)$ under the projection $\pi_2$ are precisely the vertices $(\be_i,\be_{\overline{j}})$ with $\mathrm{prec}(\overline{j})=i\in I$.  
By Corollary~\ref{lem.conepoints2}, these vertices are cone points of any triangulation of $\UIJ$ that is obtainable by the subdivision algebra, thus any triangulation of $\calP(G)$ obtained by subdividing $M_G$ induces a triangulation of $\UIJ$ by adding back the cone points $\{(\be_i,\be_{\overline{j}})\mid \mathrm{prec}(\overline{j}) =i\in I \}$.

We conclude this section with an example showing how to simultaneously obtain a triangulation of $\UIJ$, $\calF_{\hatG}$, and $\calP(G)$ via the subdivision algebra.
In this example, we reduce the monomial $M_G$ according to the length reduction order, defined as follows.

\begin{definition}\label{def:length}
The {\em length} of an edge $(i,j)$ in $G$ is $j-i$.
At a vertex $j$ of $G$, a pair of non-alternating edges $(i,j)$ and $(j,k)$ where $i<j<k$ form a {\em longest pair} at $j$ if there is no edge $(i',j)$ with $i'<i$ and no edge $(j,k')$ with $k<k'$.
The {\em length reduction order} is the reduction order obtained by reducing the longest pair $\{(i,j), (j,k) \}$ with minimal $j$ at each reduction step.
\end{definition}
Figure~\ref{fig:reductionTree} is an example of a reduction tree obtained using the length reduction order.

\begin{example}\label{ex:runningend} 
We finish the running example for the valid pair $I= \{1,2,3,5,9 \}, \overline{J}=\{\bar{2},\bar{7}, \bar{8}, \bar{9}\}$ from Example~\ref{ex:running}.
The graph $G=\GIJ$ is pictured in the bottom left of Figure~\ref{fig:G(I,J)}.
The flow polytope $\calF_{\hatG}\subseteq \bbR^{14}$ and the polytope $\UIJ\subseteq \bbR^{18}$ are each $7$-dimensional and contains $15$ vertices.
The polytope $\calS(G)\subseteq \bbR^5$ and the acyclic root polytope $\calP(G)\subseteq \bbR^9$ are each $5$-dimensional and contains $12$ vertices.

Reducing $M_{G}=x_{12}x_{25}x_{35}x_{58}x_{59}$ according to the length reduction order yields $16$ highest degree terms which correspond to the $16$ facets in the associated triangulations of $\calP(G)$, $\calF_{\hatG}$, and $\UIJ$.
One such highest degree monomial is $M=x_{12}x_{19}x_{38}x_{39}x_{58}$.
It encodes the maximal alternating non-crossing graph $G_M$ in Figure~\ref{fig:running}, which corresponds to the facet $\mathscr{F}$ that is the convex hull of $\mathbf{0}$, $\be_1-\be_2$, $\be_1-\be_9$, $\be_3-\be_8$, $\be_3-\be_9$, and $\be_5-\be_8$ in the triangulation of the acyclic root polytope $\calP(G)$.

In the context of the triangulation of the flow polytope $\calF_{\hatG}$, the monomial $M
$ corresponds to the facet that is the convex hull of the eight routes in $\widehat{G}_M$, shown on the left of the following table.
Under the integral equivalence map $\varphi_1$, this facet in the triangulation of $\calF_{\hatG}$ maps to the facet in the triangulation of $\UIJ$ consisting of the convex hull of the eight vertices on the right of the following table.
\begin{eqnarray*}
(s,9,t) & \leftrightarrow &(\be_9, \be_{\overline{9}})\\
(s,5,8,t) & \leftrightarrow &(\be_5, \be_{\overline{8}})\\
(s,5,t) & \leftrightarrow &(\be_5, \be_{\overline{7}})\\
(s,3,5,9,t) & \leftrightarrow &(\be_3, \be_{\overline{9}})\\
(s,3,5,8,t) & \leftrightarrow &(\be_3, \be_{\overline{8}})\\
(s,2,t) & \leftrightarrow &(\be_2, \be_{\overline{2}})\\
(s,1,2,5,9,t) & \leftrightarrow &(\be_1, \be_{\overline{9}})\\
(s,1,2,t) & \leftrightarrow &(\be_1, \be_{\overline{2}})
\end{eqnarray*}

Observe that $I\cap \mathrm{prec}(\overline{J}) = \{2,5,9\}$. 
The routes $(s,2,t)$, $(s,5,t)$, and $(s,9,t)$ in $\hatG$ are the vertices of $\calF_{\hatG}$ that are sent to $\mathbf{0}$ in $\calP(G)$ by the maps $\pi_2 \circ \varphi_1$ and $\varphi_2\circ\pi_1$, and by Lemma~\ref{lem.conepoints1}, these three routes are cone points in any triangulation of $\calF_{\hatG}$ obtainable by the subdivision algebra. 
Their counterparts in $\UIJ$ are the vertices $(\be_2, \be_{\overline{2}})$, $(\be_5, \be_{\overline{7}})$, and $(\be_9, \be_{\overline{9}})$.
Thus, we see how the facet $\mathscr{F}$ in the triangulation of $\calP(G)$ induces the corresponding facet of $\UIJ$: the five nonzero vertices in $\mathscr{F}$ correspond with the vectors $(\be_1, \be_{\overline{2}})$, $(\be_1, \be_{\overline{9}})$, $(\be_3,\be_{\overline{8}})$, $(\be_3, \be_{\overline{9}})$, $(\be_5,\be_{\overline{8}})$, and adding back the three cone points $(\be_2, \be_{\overline{2}})$, $(\be_5, \be_{\overline{7}})$, and $(\be_9, \be_{\overline{9}})$ recovers the corresponding facet in the triangulation of $\UIJ$.

Finally, we note that the eight vertices forming a facet in the triangulation of $\UIJ$ correspond to the arcs of what is known as an $(\IJ)$-tree (see the graph $T$ in Figure~\ref{fig:running}).  
We discuss the importance of the connection between $(\IJ)$-trees and full-dimensional leaves in a reduction tree in the next section.
\end{example}

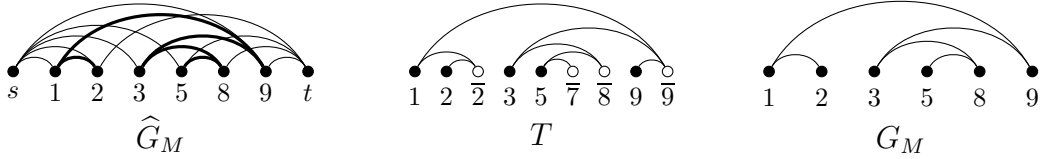
\begin{figure}
    \centering
\begin{tikzpicture}[scale=0.7]
\begin{scope}[xshift = 0, yshift = 0, scale=0.8]
	\node() at (3.5, -1.5) {$\widehat{G}_M$};
	
	\vertex[fill](s) at (0,0) {};
	\vertex[fill](1) at (1,0) {};
	\vertex[fill](2) at (2,0) {};
	\vertex[fill](3) at (3,0) {};
	\vertex[fill](5) at (4,0) {};
	\vertex[fill](8) at (5,0) {};
	\vertex[fill](9) at (6,0) {};
	\vertex[fill](t) at (7,0) {};	
    \node() at (0,-0.5) {\footnotesize{$s$}};
    \node() at (1,-0.5) {\footnotesize{$1$}};
    \node() at (2,-0.5) {\footnotesize{$2$}};
    \node() at (3,-0.5) {\footnotesize{$3$}};
    \node() at (4,-0.5) {\footnotesize{$5$}};
    \node() at (5,-0.5) {\footnotesize{$8$}};
    \node() at (6,-0.5) {\footnotesize{$9$}};
    \node() at (7,-0.5) {\footnotesize{$t$}};    

	\draw[very thick] (1) to [bend left=60] (9);
    \draw[very thick] (1) to [bend left=60] (2);
    \draw[very thick] (3) to [bend left=60] (9);  
    \draw[very thick] (3) to [bend left=60] (8);    
    \draw[very thick] (5) to [bend left=60] (8);
    
    \draw[] (s) to [bend left=60] (1);
    \draw[] (s) to [bend left=60] (2);
    \draw[] (s) to [bend left=60] (3);
    \draw[] (s) to [bend left=60] (5);
    \draw[] (s) to [bend left=60] (9);
    \draw[] (2) to [bend left=60] (t);
    \draw[] (5) to [bend left=60] (t);
    \draw[] (8) to [bend left=60] (t);
    \draw[] (9) to [bend left=60] (t);    
\end{scope}
\begin{scope}[xshift = 200, yshift = 0, scale=0.6]
	\node() at (5, -2) {$T$};	
	\vertex[fill](1) at (1,0) {};
	\vertex[fill](2) at (2,0) {};
	\vertex[fill](3) at (4,0) {};
	\vertex[fill](5) at (5,0) {};
	\vertex[fill](9) at (8,0) {};		
	\vertex[](2bar) at (3,0) {};
	\vertex[](7bar) at (6,0) {};
	\vertex[](8bar) at (7,0) {};
	\vertex[](9bar) at (9,0) {};
	
    \node() at (1,-0.8) {\footnotesize{$1$}};
    \node() at (2,-0.8) {\footnotesize{$2$}};
    \node() at (3,-0.7) {\footnotesize{$\overline{2}$}};
    \node() at (4,-0.8) {\footnotesize{$3$}};
    \node() at (5,-0.8) {\footnotesize{$5$}};
    \node() at (6,-0.7) {\footnotesize{$\overline{7}$}};
    \node() at (7,-0.7) {\footnotesize{$\overline{8}$}};
    \node() at (8,-0.8) {\footnotesize{$9$}};
    \node() at (9,-0.7) {\footnotesize{$\overline{9}$}};    
    
	\draw[] (1) to [bend left=60] (2bar);
    \draw[] (1) to [bend left=60] (9bar);
    \draw[] (2) to [bend left=60] (2bar);
    \draw[] (3) to [bend left=60] (8bar);
    \draw[] (3) to [bend left=60] (9bar);  
    \draw[] (5) to [bend left=65] (7bar);  
    \draw[] (5) to [bend left=60] (8bar);    
    \draw[] (9) to [bend left=60] (9bar);
\end{scope}
\begin{scope}[xshift = 380, yshift = 0]
	\node() at (3.5, -1.3) {$G_M$};
	\vertex[fill](1) at (1,0) {};
	\vertex[fill](2) at (2,0) {};
	\vertex[fill](3) at (3,0) {};
	\vertex[fill](5) at (4,0) {};
	\vertex[fill](8) at (5,0) {};
	\vertex[fill](9) at (6,0) {};
    \node() at (1,-0.5) {\footnotesize{$1$}};
    \node() at (2,-0.5) {\footnotesize{$2$}};
    \node() at (3,-0.5) {\footnotesize{$3$}};
    \node() at (4,-0.5) {\footnotesize{$5$}};
    \node() at (5,-0.5) {\footnotesize{$8$}};
    \node() at (6,-0.5) {\footnotesize{$9$}};

	\draw[] (1) to [bend left=60] (9);
    \draw[] (1) to [bend left=60] (2);
    \draw[] (3) to [bend left=60] (9);  
    \draw[] (3) to [bend left=60] (8);    
    \draw[] (5) to [bend left=60] (8);
\end{scope}
\end{tikzpicture}
    \caption{$M=x_{12}x_{19}x_{38}x_{39}x_{58}$ is a highest degree monomial in the reduction of $M_G$ of Example~\ref{ex:runningend}.  $M$ corresponds with the alternating non-crossing graph $G_M$ whose edges encode a facet in the triangulation of $\calP(G)$. The routes in the partially augmented graph $\widehat{G}_M$ form a facet in the triangulation of $\calF_{\hatG}$, and the arcs in the $(\IJ)$-tree $T$ form a facet in the triangulation of $\UIJ$.  Note that $\mathrm{prec}(T)=G_M$.
    }
    \label{fig:running}
\end{figure}

\section{\texorpdfstring{The $(\IJ)$-Tamari complex via the subdivision algebra}{}}
\label{sec.nuTamari}

In this section of the article, we apply Theorem~\ref{thm.main} to show that the $(\IJ)$-Tamari complex is encoded by the reduced form of the monomial $M_{\GIJ}$ obtained by performing reductions in the length reduction order.

The $(\IJ)$-Tamari complex $\mathscr{C}_{\IJ}$ is a simplicial complex introduced by Ceballos, Padrol, and Sarmiento~\cite{CPS19} in their study of $(\IJ)$-Tamari lattices.
They obtain $\mathscr{C}_{\IJ}$ via a triangulation of $\UIJ$, and show that the dual graph of the complex is the Hasse diagram of the $(\IJ)$-Tamari lattice. 
The $(\IJ)$-Tamari complex is defined in terms of objects known as $(\IJ)$-forests, which we now define. 

Recall that given a valid pair $(\IJ)$, the graph $\AIJ$ has vertex set $I\sqcup \overline{J}$ and edge set $\{(i,\overline{j}) \mid i \prec_{\IJ} \overline{j}, i\in I, \overline{j}\in \overline{J} \}$.
An {\em $(\IJ)$-forest} is a subgraph of $\AIJ$ that is {\em non-crossing}, meaning it does not contain two arcs $(i,\overline{j})$ and $(i',\overline{j}')$ satisfying $i \prec i' \prec \overline{j} \prec \overline{j}'$.
The $(\IJ)$-forests which contain the arc $(\min(I), \max(\overline{J}))$ and have no isolated nodes are known as {\em covering $(\IJ)$-forests}. 
An {\em $(\IJ)$-tree} is a maximal $(\IJ)$-forest, and has $|I|+|\overline{J}|-1$ edges.

\begin{definition}
The {\em $(\IJ)$-Tamari complex} $\mathscr{C}_{\IJ}$ is the flag simplicial complex on the set of $(\IJ)$-forests, whose minimal non-faces are pairs of crossing arcs.  
\end{definition}

The $(\IJ)$-Tamari complex can be realized as a triangulation of $\calU_{\IJ}$ known as the {\em $(\IJ)$-associahedral triangulation} $\mathscr{A}_{\IJ}$. 
This triangulation is characterized by the following rephrasing of a result by Ceballos, Padrol, and Sarmiento in \cite{CPS19}.

\begin{lemma}[{\cite[Lemma 4.3]{CPS19}}]
\label{lem.innerFaces}
Each interior simplex of the $(\IJ)$-Tamari complex in the $(\IJ)$-associahedral triangulation is given by
$$\mathrm{conv}\{(\be_i,\be_{\overline{j}}) \mid (i,\overline{j}) \text{ is an arc in }F\},$$
where $F$ is a covering $(\IJ)$-forest.
In particular, the facets of the $(\IJ)$-Tamari complex are given by the $(\IJ)$-trees.
\end{lemma}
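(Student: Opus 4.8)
The plan is to recognize the statement as a restatement of \cite[Lemma 4.3]{CPS19} and to verify the dictionary between the two formulations. Recall from \cite{CPS19} (and from the set-up preceding this lemma) that the simplices of the triangulation $\mathscr{A}_{\IJ}$ of $\UIJ$ are exactly the sets
$$\sigma_F = \mathrm{conv}\{(\be_i,\be_{\overline j}) \mid (i,\overline j)\in F\},$$
as $F$ ranges over the $(\IJ)$-forests, with $\sigma_F$ full-dimensional precisely when $F$ is an $(\IJ)$-tree. Since $\sigma_F$ is a simplex whose vertex set is $\{(\be_i,\be_{\overline j}) \mid (i,\overline j)\in F\}$, and the proper faces of $\UIJ$ that will be relevant are each cut out by setting a first-block coordinate $x_i$ to $0$ ($i\in I$), a second-block coordinate $x_{\overline j}$ to $0$ ($\overline j\in\overline J$), or by the one extra valid inequality of $\UIJ$ that is tight on every vertex except $(\be_{\min I},\be_{\max\overline J})$, the containments of $\sigma_F$ in faces are governed purely by which arcs of $\AIJ$ occur in $F$. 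Concretely: $\sigma_F$ lies in the proper face $\calU_{I\setminus\{i\},\overline J}$ iff $i$ is an isolated node of $F$ (this face is proper since $(\be_i,\be_{\max\overline J})$ is a vertex of $\UIJ$); $\sigma_F$ lies in $\calU_{I,\overline J\setminus\{\overline j\}}$ iff $\overline j$ is isolated; and $\sigma_F$ lies in the extra facet iff the arc $(\min I,\max\overline J)$ is absent from $F$.

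Granting from \cite{CPS19} that every facet of $\UIJ$ is of one of these three kinds, the part of $\partial\UIJ$ picked out by $\mathscr{A}_{\IJ}$ is exactly the union of the $\sigma_F$ with $F$ having an isolated node or missing the arc $(\min I,\max\overline J)$. Negating, $\sigma_F$ is an interior simplex of $\mathscr{A}_{\IJ}$ precisely when $F$ has no isolated node and contains $(\min I,\max\overline J)$, i.e.\ precisely when $F$ is a covering $(\IJ)$-forest; this is the first assertion. For the ``in particular'' statement, the facets of $\mathscr{C}_{\IJ}$ are its maximal faces, which are the maximal $(\IJ)$-forests, i.e.\ the $(\IJ)$-trees by definition; it remains to check that every $(\IJ)$-tree $T$ is covering, hence interior. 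It contains $(\min I,\max\overline J)$ because that arc crosses no arc of $\AIJ$ and so lies in every maximal non-crossing subgraph. And it has no isolated node: in particular $\min I$ and $\max\overline J$ are never isolated (they are endpoints of the long arc), so if $v\in I\sqcup\overline J$ were isolated in $T$ then $(I\setminus\{v\},\overline J)$ or $(I,\overline J\setminus\{v\})$ is again a valid pair, $T$ is a non-crossing subgraph of the corresponding $A(\cdot,\cdot)$, hence has at most $|I|+|\overline J|-2$ arcs by the cardinality of $(\IJ)$-trees for that smaller pair, contradicting $|E(T)| = |I|+|\overline J|-1$.

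The main obstacle is the single external input used above: the facet description of $\UIJ$, and specifically the existence of exactly one facet beyond those obtained by deleting an element of $I$ or of $\overline J$. That extra facet is the geometric reason the arc $(\min I,\max\overline J)$ must lie in every interior simplex, and it is precisely what distinguishes ``covering forest'' from the weaker notion of ``forest without isolated nodes.'' Everything else — the identification of simplices of $\mathscr{A}_{\IJ}$ with $(\IJ)$-forests, and the three ``iff'' statements relating face containment to isolated nodes and to the long arc — is either already recorded in \cite{CPS19} or immediate bookkeeping, and assembling these pieces yields the claimed rephrasing.
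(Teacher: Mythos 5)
The paper supplies no proof of this lemma: it is quoted verbatim from [CPS19, Lemma 4.3] as ``a rephrasing of a result by Ceballos, Padrol, and Sarmiento,'' so there is no in-paper argument to compare against. Your verification that the rephrasing is faithful is sound: the identification of simplices of $\mathscr{A}_{\IJ}$ with $(\IJ)$-forests, the reduction of ``interior'' to the two combinatorial conditions (no isolated node, long arc present) via the facet structure of $\UIJ$, and the check that every $(\IJ)$-tree is automatically covering — the long arc crosses nothing so lies in every maximal non-crossing subgraph, and isolating a vertex would force $T$ into $A(I\setminus\{v\},\overline{J})$ or $A(I,\overline{J}\setminus\{\overline{v}\})$, whose maximal non-crossing subgraphs have only $|I|+|\overline{J}|-2$ arcs — are all correct, and you are right to flag the facet description of $\UIJ$ (specifically the single extra facet responsible for the long-arc condition) as the one external input inherited from [CPS19]. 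Since the original is simply cited here, your write-up is a legitimate filling-in of the bookkeeping the paper defers to its reference rather than a different route.
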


Suppose $G$ is a graph whose vertex set $V(G) \subseteq [n]$.
Such a graph is said to be {\em alternating} if it does not have a pair of edges $(i,j)$ and $(j,k)$ satisfying $i<j<k$, and it is {\em non-crossing} if no pair of edges $(i,j)$ and $(i',j')$ satisfies $i<i'<j<j'$. 
Let $\calD_{\IJ}$ denote the set of all maximal alternating non-crossing graphs on the vertex set $I \cup \mathrm{prec}(\overline{J})$ whose edges $(i,j)$ satisfy $i\in I$, $j\in \mathrm{prec}(\overline{J})$.
Note that by maximality the graphs in $\calD_{\IJ}$ must be trees and therefore have $|I\cup \mathrm{prec}(\overline{J})| -1$ edges.

\begin{lemma}
\label{lem.IJtreeToAltGraphBijection}
Let $(\IJ)$ be a valid pair.
The set of $(\IJ)$-trees is in bijection with the set $\calD_{\IJ}$ of alternating non-crossing trees via the quotient operation $\mathrm{prec}$.
\end{lemma}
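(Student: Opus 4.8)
The plan is to exhibit a bijection and its inverse explicitly, using the quotient map $\mathrm{prec}$ in one direction and a canonical ``de-quotienting'' in the other. First I would define the forward map: given an $(\IJ)$-tree $T$ (a maximal non-crossing subgraph of $\AIJ$ with $|I|+|\overline{J}|-1$ edges), set $\Psi(T) = \mathrm{prec}(T)$, the quotient graph on vertex set $I \cup \mathrm{prec}(\overline{J})$ obtained by identifying each $\overline{j} \in \overline{J}$ with $\mathrm{prec}(\overline{j})$ as in Section~\ref{sec.GIJ}. The first task is to check that $\Psi(T) \in \calD_{\IJ}$: its edges $(i, \mathrm{prec}(\overline{j}))$ satisfy $i \in I$, $\mathrm{prec}(\overline{j}) \in \mathrm{prec}(\overline{J})$ by construction of $\AIJ$; it has no alternating pair $(i,j),(j,k)$ with $i<j<k$ because a vertex $j = \mathrm{prec}(\overline{j})$ that is simultaneously a head and a tail would force $j \in I \cap \mathrm{prec}(\overline{J})$, and one must verify that the corresponding arcs in $T$ do not coexist in a non-crossing $(\IJ)$-tree — this uses that $\overline{j}$ sits immediately after $j=\mathrm{prec}(\overline{j})$ in $\prec_{\IJ}$, so an arc into $\overline{j}$ and an arc out of $j$ in $T$ would have to be nested or crossing, and maximality/non-crossing pins down which; and $\Psi(T)$ is non-crossing because the quotient only merges $\prec_{\IJ}$-adjacent pairs, which cannot create a crossing $i<i'<j<j'$ from a non-crossing configuration. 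Finally one counts edges: $\mathrm{prec}$ does not merge edges (distinct arcs of $T$ map to distinct arcs, since two arcs of $\AIJ$ with the same endpoints after quotient would already cross or coincide in $T$), so $|\Psi(T)| = |T| = |I|+|\overline{J}|-1 = |I \cup \mathrm{prec}(\overline{J})| - 1$ by a bookkeeping identity on the partition block sizes, hence $\Psi(T)$ is a tree in $\calD_{\IJ}$.

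Next I would construct the inverse. Given $D \in \calD_{\IJ}$, I lift each edge $(i,j)$ of $D$ back to an arc of $\AIJ$: if $j = \mathrm{prec}(\overline{j})$ arose from a block $\{j', \overline{j}\}$ (so $j$ is the $I$-element immediately preceding $\overline{j}$ in $\prec_{\IJ}$), the edge $(i,j)$ of $D$ with $i$ playing the role of a tail lifts to the arc $(i, \overline{j})$ of $\AIJ$; if instead $j \in \mathrm{prec}(\overline{J}) \setminus I$ then $j = j$ with $\overline{j}$ its own block representative and $(i,j)$ lifts to $(i, \overline{j})$ directly; edges where $j \in I \cap \mathrm{prec}(\overline{J})$ appearing as a \emph{tail} of another edge are handled by the same rule applied to that other edge. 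Call the resulting graph $\Phi(D)$. One checks $\Phi(D)$ is non-crossing (again the de-quotient only inserts $\prec_{\IJ}$-adjacent vertices into existing arc-endpoints, which cannot create a crossing), has the right edge count, and is maximal — i.e.\ an $(\IJ)$-tree — because any arc that could be added to $\Phi(D)$ without crossing would, after applying $\mathrm{prec}$, give an edge addable to $D$ without creating a crossing or an alternating pair, contradicting maximality of $D$ in $\calD_{\IJ}$. Then I verify $\Psi \circ \Phi = \mathrm{id}$ and $\Phi \circ \Psi = \mathrm{id}$, which is essentially immediate from the definitions once one observes that $\mathrm{prec}$ restricted to the arc-sets is a bijection at the level of edges with the lift above as its two-sided inverse.

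The main obstacle I anticipate is the \emph{maximality/well-definedness} bookkeeping around vertices $v \in I \cap \mathrm{prec}(\overline{J})$ — equivalently, the blocks $\{v, \overline{j}\}$ where $v = \mathrm{prec}(\overline{j}) \in I$. At such a vertex the quotient glues a ``right endpoint'' $\overline{j}$ to a ``left endpoint'' $v$, so $v$ can be both head and tail in $D$ even though no single vertex of $\AIJ$ behaves that way; I must show precisely that the alternating-free condition on $\calD_{\IJ}$ corresponds to the non-crossing condition on $(\IJ)$-trees across these glued vertices, and that edge counts match. The cleanest route is probably to recall (from Lemma~\ref{lem.conepoints2} and the surrounding discussion) that the arcs $(v', \overline{j})$ with $\mathrm{prec}(\overline{j}) = v' \in I$ are forced to be ``loops'' collapsed to cone points, so they do not appear as genuine edges of $\Psi(T)$ at all — reconciling this with the statement that $\Psi(T)$ is a spanning tree on $I \cup \mathrm{prec}(\overline{J})$ requires care, and is where I would spend the bulk of the argument. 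A secondary, lighter obstacle is confirming that every $(\IJ)$-tree actually contains the arc $(\min(I), \max(\overline{J}))$ (it does, by validity of the pair and maximality) so that the forward image lands where claimed; this should follow from the connectivity arguments in Lemma~\ref{lem:Gconn}.
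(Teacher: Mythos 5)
Your forward map is the paper's, and you correctly identify the key verification, namely that $\mathrm{prec}(T)$ is alternating. The genuine gap is in the inverse $\Phi$, and it is exactly the issue you flag as your ``main obstacle'' but do not resolve. You lift each edge of $D$ to a \emph{single} arc of $\AIJ$, so $\Phi(D)$ has $|D| = |I \cup \mathrm{prec}(\overline{J})| - 1$ arcs, whereas an $(\IJ)$-tree has $|I| + |\overline{J}| - 1$ arcs; the deficit is $|I \cap \mathrm{prec}(\overline{J})|$, which is always positive for a valid pair. Concretely, $\Phi(D)$ as you describe it never contains any arc $(v,\overline{j})$ with $\mathrm{prec}(\overline{j}) = v \in I$, i.e.\ precisely the arcs that collapse to loops under $\mathrm{prec}$. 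For each such $v$, exactly one of $v$ or $\overline{j}$ ends up isolated in $\Phi(D)$ (depending on whether $v$ is a source or a sink in $D$), so $\Phi(D)$ is a forest with $|I \cap \mathrm{prec}(\overline{J})|+1$ components, not a tree. Your maximality argument also breaks at exactly these arcs: each $(v,\overline{j})$ can be added to $\Phi(D)$ without crossing anything (its endpoints are $\prec_{\IJ}$-consecutive), yet its image under $\mathrm{prec}$ is a loop, not an addable edge of $D$, so the intended contradiction with maximality of $D$ never materializes.

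The paper's inverse repairs this by making the lift two-to-one at each $v\in I\cap\mathrm{prec}(\overline{J})$: writing $v=\mathrm{prec}(\overline{j})$, an edge $(h,v)$ of $D$ is replaced by the pair $(h,\overline{j})$, $(v,\overline{j})$, and an edge $(v,k)$ is replaced by $(v,\overline{j})$, $(v,\mathrm{prec}^{-1}(k))$. This adds exactly one arc per vertex of $I\cap\mathrm{prec}(\overline{J})$, restoring the count to $|I|+|\overline{J}|-1$, and the added arcs are precisely the loop arcs that $\mathrm{prec}$ deletes; with them present your two-sided-inverse check goes through. Two minor remarks. Your worry that ``$v$ can be both head and tail in $D$'' is moot: graphs in $\calD_{\IJ}$ are alternating by definition, so every vertex is a pure source or a pure sink. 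And in the forward direction, the preimages in $T$ of a non-alternating pair $(p,q),(q,r)$ of $\mathrm{prec}(T)$ are always a \emph{crossing} pair of arcs, never merely nested, because $q$ and its $\overline{J}$-partner in the quotient block are $\prec_{\IJ}$-consecutive; your ``nested or crossing, and maximality pins down which'' is more hedged than necessary.
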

\begin{proof}
Recall from Section~\ref{sec.GIJ} that for any subgraph $H$ of $\AIJ$, the graph $\mathrm{prec}(H)$ is the quotient of $H$ where two vertices $i$ and $\overline{j}$ are identified if $i=\mathrm{prec}(\overline{j})$.

If $T$ is an $(\IJ)$-tree, then its arcs are non-crossing by definition, and so its quotient under $\mathrm{prec}$ remains a non-crossing tree on the vertex set $I \cup \mathrm{prec}(\overline{J})$.
Suppose for contradiction that $\mathrm{prec}(T)$ has a pair $(p,q)$, $(q,r)$ of non-alternating edges.  
We must have $p\in I$, $r\in \mathrm{prec}(\overline{J})$, and $q\in I \cap  \mathrm{prec}(\overline{J})$.  
Because all edges in $T$ are of the form $(i,\overline{j})$ for $i\in I$ and $\overline{j}\in \overline{J}$, then the pre-image of this pair of non-alternating edges $(p,q)$ and $(q,r)$ is a pair of crossing arcs in $T$, which is not allowed in an $(\IJ)$-tree.
Therefore, $\mathrm{prec}(T) \in \calD_{\IJ}$.

The inverse of $\mathrm{prec}$ is defined as follows.  
Given $D\in \calD_{\IJ}$, suppose the vertex $i\in I \cap \mathrm{prec}(\overline{J})$ is such that $i=\mathrm{prec}(\overline{j})$. 
If $(h,i)$ is an edge in $D$ then replace $(h,i)$ with the pair of non-crossing arcs $(h,\overline{j})$ and $(i,\overline{j})$.
Otherwise if $(i,k)$ is an edge in $D$ then replace $(i,k)$ with the pair of non-crossing arcs $(i, \overline{j})$ and $(i,\mathrm{prec}^{-1}(k))$.
Doing this for each vertex in $I \cap \mathrm{prec}(\overline{J})$ adds $|I \cap \mathrm{prec}(\overline{J})|$ edges to $D$, resulting in a non-crossing tree on the vertex set $I \sqcup \overline{J}$ with $|I \cup \mathrm{prec}(\overline{J})| - 1+ |I \cap \mathrm{prec}(\overline{J})| = |I|+|\overline{J}|-1$ edges, and so is an $(\IJ)$-tree.

Therefore, $\mathrm{prec}$ is a bijection between the set of $(\IJ)$-trees and the set $\calD_{\IJ}$.
\end{proof}

\begin{proposition}
\label{lem:anyorder}
The reduced forms of the monomial $M_{\GIJ}$ obtained by any reduction order in which the longest pair of edges is reduced at every reduction step, are equal. 
\end{proposition}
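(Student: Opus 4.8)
The plan is to regard the longest‑pair reductions as a terminating abstract rewriting system on the polynomials of $\calS(\beta)$ and to invoke Newman's Lemma: a terminating, locally confluent rewriting system has a unique normal form on each element. Termination is free, since every reduction sequence of a monomial in $\calS(\beta)$ halts in a reduced form, so in particular any sequence of longest‑pair reductions of $M_{\GIJ}$ halts. Next I would check that the normal forms of the longest‑pair system are exactly the reduced forms in the usual sense: if a monomial $M_H$ is reducible then its graph $H$ has a non‑alternating pair $(i,j),(j,k)$, and then the leftmost edge into $j$ together with the rightmost edge out of $j$ is a longest pair at $j$, so $M_H$ admits a longest‑pair reduction. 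Hence ``irreducible for longest‑pair reductions'' coincides with ``reduced form'', and it remains only to prove local confluence.

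For local confluence, suppose $p$ is reached from $M_{\GIJ}$ by longest‑pair reductions and $p\to q_1$, $p\to q_2$ are two longest‑pair steps using distinct pairs. Monomials of $p$ divisible by at most one of the two reduced pairs are handled immediately (perform the missing reduction on whichever side lacks it), so the essential case reduces to a single graph $H$ carrying two distinct longest pairs, say $\{(a,j),(j,b)\}$ at $j$ and $\{(c,k),(k,d)\}$ at $k$ with $j<k$. Comparing heads and tails shows the only edge the two pairs can share is $(j,b)=(c,k)=(j,k)$, so they are either edge‑disjoint or overlap in exactly that edge. In either case the plan is: perform the first reduction, producing three children $H_1^{(1)},H_2^{(1)},H_3^{(1)}$; verify that in each child the second pair — left unchanged if the pairs were disjoint, and with the deleted edge $(j,k)$ replaced by the newly created edge if they overlapped — is again a longest pair, and reduce it; then check that the resulting family of graphs is symmetric in the two original pairs. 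When the pairs are disjoint this is the assertion that two independent single‑edge modifications commute. When they overlap in $(j,k)$, the three edges involved are $(a,j),(j,k),(k,d)$, and the two orders of reduction expand $x_{aj}x_{jk}x_{kd}$ exactly as in the length‑reduction confluence of the path $a\to j\to k\to d$; running that computation inside $H$ shows the two orders yield the same multiset of leaves.

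The hard part will be the bookkeeping in this last step: one must track how the freshly created edges $(a,b)$, respectively $(a,k),(j,d),(a,d)$, sit inside the ambient graph $H$, verifying that at each intermediate graph the pair one wants to reduce next really is a longest pair there, that no other, order‑dependent longest pair is forced in between, and — underlying all of this — that whenever a pair is reduced as a ``longest pair'' it is indeed longest in every monomial it touches. Once local confluence is established, Newman's Lemma gives a unique longest‑pair normal form of $M_{\GIJ}$, which is the statement of the proposition. (Alternatively one could try to identify the triangulation of $\calF_{\hatG}$ produced by longest‑pair reductions with a canonical triangulation defined without reference to any reduction order and deduce the claim from there, but the rewriting argument above is more self‑contained.)
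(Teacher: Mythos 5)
Your plan — treat longest‑pair reduction as an abstract rewriting system, observe termination, and invoke Newman's Lemma after establishing local confluence — is a genuinely different route from the paper's, and it is also where the argument has a real gap: the local confluence that carries all the weight is never established, and the key structural hypothesis on $\GIJ$ that would make it true is never invoked. Specifically, you reason about an ambient graph $H$ carrying two longest pairs without ever using that $\GIJ$ is \emph{non-crossing} (and hence that every graph in the reduction tree is non-crossing; this is the first thing the paper's proof records). Without non-crossing, your claim that the second longest pair is ``left unchanged if the pairs were disjoint'' is false: reducing at $j$ creates an edge $(i_1,k_1)$, and if $k_1$ is the vertex $j'$ of the other pair this new incoming edge can become the minimal one at $j'$ and replace the longest pair there. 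It is the non-crossing condition that forbids this ($i_1 < i_2 < j < j'$ would be a crossing), so any confluence argument must carry the non-crossing invariant along. You also need to check that the pair being reduced is a longest pair in \emph{every} graph of the current polynomial that contains it (the reduction-tree construction forces all such graphs to be reduced at the same pair); this too is a consequence of non-crossing and is not automatic. These are exactly the ``hard bookkeeping'' steps you flag but do not carry out, so the proposal is not yet a proof.

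For comparison, the paper's argument is the ``alternative'' you mention and dismiss at the end, and it sidesteps confluence entirely. It observes that (a) reducing a longest pair in a non-crossing graph produces non-crossing graphs, so all leaves of a longest-pair reduction tree are alternating non-crossing graphs; (b) since $\GIJ$ is a tree, simple reductions keep $|I\cup\mathrm{prec}(\overline{J})|-1$ edges, so the full-dimensional leaves lie in the finite set $\calD_{\IJ}$ of \emph{maximal} alternating non-crossing trees; and (c) the number of full-dimensional leaves equals the normalized volume of $\calF_{\hatG}$, which via the integral equivalence $\varphi_1$ equals the normalized volume of $\UIJ$, which equals the number of $(\IJ)$-trees, which equals $|\calD_{\IJ}|$ by Lemma~\ref{lem.IJtreeToAltGraphBijection}. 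Containment plus equal cardinality forces the leaf set to be all of $\calD_{\IJ}$ independently of the order chosen. This is a counting identification rather than a rewriting argument, and it makes essential use of the global facts (tree, non-crossing, volume of $\calU_{I,\overline J}$) that your local-confluence route would also need but does not yet establish. If you want to salvage the Newman's-Lemma approach you must (i) prove and propagate the non-crossing invariant through every reduction, (ii) verify that disjoint and overlapping longest pairs interact as you describe under that invariant, and (iii) handle the polynomial-level synchronization of reductions across monomials; at that point you have essentially re-derived the structural facts the paper uses, and the counting shortcut becomes the cleaner finish.
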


\begin{proof}
Equivalently, phrased in terms of graphs, it suffices to show that the set of full-dimensional leaves in such a reduction tree for $G=\GIJ$ is the set $\calD_{\IJ}$ of alternating non-crossing trees, as the full-dimensional leaves determine the remaining leaves in the reduction tree.

First, we note that a reduction of a longest pair of edges in a non-crossing graph $H$ preserves the non-crossing condition. 
Let $(i,j)$ and $(j,k)$ be the longest pair of edges at a vertex $j\in V(H)$.
Since no edge in $H$ crosses $(i,j)$ or $(j,k)$, and they are a longest pair at $j$, then the edge $(i,k)$ does not cross any edge in $H$. 
Thus each of the three graphs resulting from this reduction at $j$ is non-crossing. 

Let $R_G$ be a reduction tree for $G$ obtained by reducing a longest pair of edges at every reduction step. 
Since the graph $G$ is non-crossing by construction, then all graphs in $R_G$ are non-crossing.
Also, we note that a reduction at a vertex reduces the the number of non-alternating pairs of edges by one. 
Thus the leaves in the reduction tree are alternating non-crossing graphs.
In addition, $G$ is a tree by Lemmas~\ref{lem:nonoriented_acyclic} and~\ref{lem:Gconn}, so $|E(G)|= |I \cup \mathrm{prec}(\overline{J})|-1$.
Since simple reductions preserve the number of edges,
then the full-dimensional leaves in $R_G$ are alternating non-crossing graphs with $|I\cup \mathrm{prec}(\overline{J})|-1$ edges, and hence must be maximal. 
The full-dimensional leaves in $R_G$ correspond with the facets of a unimodular triangulation of $\calF_{\hatG}$, therefore they are enumerated by the normalized volume of $\calF_{\hatG}$, which by the integral equivalence $\varphi_1$ of Theorem~\ref{thm.main} is also the normalized volume of $\UIJ$. 
From Lemma~\ref{lem.innerFaces} it then follows that this volume is given by the number of $(\IJ)$-trees.
Since $(\IJ)$-trees are in bijection with the graphs in $\calD_{\IJ}$ by Lemma~\ref{lem.IJtreeToAltGraphBijection}, then the full-dimensional leaves of the reduction tree must comprise the whole set $\calD_{\IJ}$. 
Therefore, the full-dimensional leaves in $R_G$ is the set $\calD_{\IJ}$.
\end{proof}

The following corollary is now immediate.

\begin{corollary}
All reduced forms of $M_{\GIJ}$ obtained using a reduction order in which longest pairs are reduced first at each vertex give rise to the same triangulation of $\calU_{\IJ}$. 
\end{corollary}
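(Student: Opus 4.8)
The plan is to deduce this corollary directly from Proposition~\ref{lem:anyorder} together with the dictionary established in Theorem~\ref{thm.main} and Corollary~\ref{cor:subdivUIJ}. Proposition~\ref{lem:anyorder} already tells us that any reduction order in which a longest pair is reduced at every step produces the \emph{same} set of full-dimensional leaves for $G = \GIJ$, namely the set $\calD_{\IJ}$ of maximal alternating non-crossing trees; moreover, since the full-dimensional leaves determine all remaining leaves of a reduction tree, the reduced form of $M_{\GIJ}$ is itself independent of the choice of such an order. So the first step is simply to invoke Proposition~\ref{lem:anyorder} to obtain a single well-defined reduced form, equivalently a single well-defined ternary reduction tree $R_G$ up to the data that matters.

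Next I would recall how a reduced form of $M_{\GIJ}$ encodes a triangulation of $\UIJ$. By the reduction lemma for flow polytopes (Lemma~\ref{lem:reductionLemma}), the leaves of $R_G$ are exactly the inner faces of a triangulation of $\calF_{\hatG}$, with the full-dimensional leaves giving its facets. By Theorem~\ref{thm.main}, the map $\varphi_1$ is an integral equivalence between $\calF_{\hatG}$ and $\UIJ$, so it carries this triangulation of $\calF_{\hatG}$ to a triangulation of $\UIJ$; explicitly, a full-dimensional leaf $H \in \calD_{\IJ}$ corresponds under $\mathrm{prec}^{-1}$ (Lemma~\ref{lem.IJtreeToAltGraphBijection}) to an $(\IJ)$-tree $T$, whose arcs determine the facet $\mathrm{conv}\{(\be_i,\be_{\overline j}) \mid (i,\overline j) \text{ an arc of } T\}$ of $\UIJ$. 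Since the set $\calD_{\IJ}$ of full-dimensional leaves does not depend on the choice of longest-pair reduction order, neither does the resulting set of facets, and hence neither does the triangulation of $\UIJ$.

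Finally, I would note that the full set of leaves (not just the full-dimensional ones) is determined by the full-dimensional ones together with the tree structure, which in turn is fixed once we know that at each stage we reduce a longest pair; so the entire simplicial complex triangulating $\UIJ$ — faces of all dimensions — is the same for any two such reduction orders. This gives the claimed statement. The argument is essentially a bookkeeping assembly of results already in hand, so I do not anticipate a genuine obstacle; the only point requiring a little care is making sure that ``the same triangulation'' is justified at the level of the whole complex and not merely at the level of top-dimensional simplices, which is why the remark that full-dimensional leaves determine the remaining leaves (already used in the proof of Proposition~\ref{lem:anyorder}) is invoked a second time here.
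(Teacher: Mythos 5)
Your proposal is correct and follows essentially the same logic as the paper, which treats this corollary as immediate from Proposition~\ref{lem:anyorder}: once the reduced form of $M_{\GIJ}$ is shown to be the same polynomial for any longest-pair reduction order, the triangulation it encodes (via Lemma~\ref{lem:reductionLemma} together with the integral equivalence $\varphi_1$) is automatically the same, at the level of all faces, not just facets. One small imprecision worth flagging: the reduction \emph{tree} structure is \emph{not} fixed by the longest-pair rule (there can be ties among longest pairs and freedom in which vertex to process first), but this does not affect the argument, since Proposition~\ref{lem:anyorder} already asserts equality of the reduced forms as polynomials — i.e.\ of the multiset of leaf monomials — and that is exactly the data that determines the triangulation.
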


Without loss of generality we may then use the length reduction order to refer to any reduction order in which longest pairs are reduced at every reduction step. 

\begin{theorem}
\label{thm.IJrealization}
The triangulation $\mathscr{T}_{len}$ of $\calF_{\hatG(I,\overline{J})}$ obtained by reducing the monomial $M_{\GIJ}$ in the length reduction order is a geometric realization of the $(\IJ)$-Tamari complex $\mathscr{C}_{\IJ}$.
\end{theorem}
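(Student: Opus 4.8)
The plan is to push the triangulation $\mathscr{T}_{len}$ of $\calF_{\hatG}$ forward along the integral equivalence $\varphi_1\colon\calF_{\hatG}\to\UIJ$ of Theorem~\ref{thm.main} and to recognize the resulting triangulation of $\UIJ$ as the $(\IJ)$-associahedral triangulation $\mathscr{A}_{\IJ}$, which is the realization of $\mathscr{C}_{\IJ}$ discussed before Lemma~\ref{lem.innerFaces}. Since $\varphi_1$ is an affine isomorphism onto $\UIJ$ carrying routes of $\hatG$ bijectively to vertices of $\UIJ$, it transports $\mathscr{T}_{len}$ to a genuine triangulation $\varphi_1(\mathscr{T}_{len})$ of $\UIJ$; as a triangulation of a polytope is determined by its set of facets, it suffices to show that $\varphi_1(\mathscr{T}_{len})$ and $\mathscr{A}_{\IJ}$ have the same facets.

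First I would pin down the facets of $\mathscr{T}_{len}$. By the reduction lemma for flow polytopes (Lemma~\ref{lem:reductionLemma}) together with the description of reduction trees in Section~\ref{sec.FP}, the facets of $\mathscr{T}_{len}$ are exactly the simplices $\calF_{\hatG_H}$ indexed by the full-dimensional leaves $H$ of a reduction tree for $G=\GIJ$ built in the length reduction order, and Proposition~\ref{lem:anyorder} identifies the set of such leaves with $\calD_{\IJ}$. Hence the facets of $\varphi_1(\mathscr{T}_{len})$ are the simplices $\varphi_1(\calF_{\hatG_H})$ for $H\in\calD_{\IJ}$.

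The heart of the argument is to evaluate $\varphi_1(\calF_{\hatG_H})$ explicitly. By Definition~\ref{def:F_G_i}, $\calF_{\hatG_H}$ is the convex hull of the routes $\mu(e)$ for $e\in E(H)$ together with the trivial routes $(s,v,t)$ for $v\in I\cap\mathrm{prec}(\overline{J})$. For an edge $e=(i,v)\in E(H)$, the route $\mu(e)$ has the form $(s,i),e_{i_1},\dots,e_{i_\ell},(v,t)$, where $e_{i_1},\dots,e_{i_\ell}\in E(G)$ are inner edges of $\hatG$ forming a directed path from $i$ to $v$ in $G$; since $\varphi_1$ sends the coordinates of inner edges to $\mathbf{0}$, $\varphi_1(\be_{(s,i)})=(\be_i,\mathbf{0})$, and $\varphi_1(\be_{(v,t)})=(\mathbf{0},\be_{\overline{j}})$ with $v=\mathrm{prec}(\overline{j})$, we get $\varphi_1(\mu(e))=(\be_i,\be_{\overline{j}})$; likewise $\varphi_1((s,v,t))=(\be_v,\be_{\overline{j}})$ with $v=\mathrm{prec}(\overline{j})$. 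Therefore $\varphi_1(\calF_{\hatG_H})$ is the convex hull of the vectors $(\be_i,\be_{\overline{j}})$ over the arcs in the set
\[
\{(i,\overline{j}) : (i,\mathrm{prec}(\overline{j}))\in E(H)\}\ \cup\ \{(v,\overline{j}) : v=\mathrm{prec}(\overline{j})\in I\cap\mathrm{prec}(\overline{J})\}.
\]
Comparing with the explicit description of the inverse of the quotient map $\mathrm{prec}$ in the proof of Lemma~\ref{lem.IJtreeToAltGraphBijection} — each edge of $H$ with an endpoint in $\mathrm{prec}(\overline{J})$ is lifted by replacing that endpoint with its $\mathrm{prec}$-preimage, and each $v\in I\cap\mathrm{prec}(\overline{J})$ contributes the short arc $(v,\mathrm{prec}^{-1}(v))$ — this arc set is precisely the arc set of the $(\IJ)$-tree $\mathrm{prec}^{-1}(H)$. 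Hence $\varphi_1(\calF_{\hatG_H})=\mathrm{conv}\{(\be_i,\be_{\overline{j}})\mid (i,\overline{j})\text{ an arc of }\mathrm{prec}^{-1}(H)\}$.

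Finally I would assemble the pieces: by Lemma~\ref{lem.IJtreeToAltGraphBijection}, $H\mapsto\mathrm{prec}^{-1}(H)$ is a bijection from $\calD_{\IJ}$ onto the set of $(\IJ)$-trees, so the facets of $\varphi_1(\mathscr{T}_{len})$ run over $\mathrm{conv}\{(\be_i,\be_{\overline{j}})\mid (i,\overline{j})\text{ an arc of }T\}$ as $T$ ranges over all $(\IJ)$-trees; by Lemma~\ref{lem.innerFaces} these are exactly the facets of $\mathscr{A}_{\IJ}$. Consequently $\varphi_1(\mathscr{T}_{len})=\mathscr{A}_{\IJ}$, and pulling back along the affine isomorphism $\varphi_1$ shows that $\mathscr{T}_{len}$ is combinatorially isomorphic to $\mathscr{A}_{\IJ}$, hence a geometric realization of $\mathscr{C}_{\IJ}$. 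The main obstacle is the facet computation in the third paragraph: it requires carefully reconciling the route description of $\calF_{\hatG_H}$ (Definition~\ref{def:F_G_i}), the action of $\varphi_1$ on routes, and the lift $\mathrm{prec}^{-1}$ from Lemma~\ref{lem.IJtreeToAltGraphBijection}, with special care for the cone-point routes $(s,v,t)$ with $v\in I\cap\mathrm{prec}(\overline{J})$, which contribute arcs that have no counterpart among the edges of $H$.
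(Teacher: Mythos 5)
Your proof is correct and follows essentially the same route as the paper's: push $\mathscr{T}_{len}$ forward along $\varphi_1$, use Proposition~\ref{lem:anyorder} to identify the full-dimensional leaves with $\calD_{\IJ}$, translate each leaf $H$ into the $(\IJ)$-tree $\mathrm{prec}^{-1}(H)$ via Lemma~\ref{lem.IJtreeToAltGraphBijection}, and conclude with Lemma~\ref{lem.innerFaces}. The only difference is cosmetic: the paper phrases the facet identification as a direct bijection between arcs of $T$ and routes in $\hatG_M$, whereas you compute $\varphi_1$ on the routes from Definition~\ref{def:F_G_i} explicitly (a slightly more careful unpacking, but the same underlying argument).
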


\begin{proof}
The polytopes $\calF_{\GIJ}$ and $\UIJ$ have the same dimension, so it suffices to show that the integral equivalence $\varphi_1$ of Theorem~\ref{thm.main} maps the triangulation $\mathscr{T}_{len}$ of $\calF_{\GIJ}$ to the $(\IJ)$-associahedral triangulation $\mathscr{C}_{\IJ}$ of $\UIJ$. 

Let $R_G$ be the reduction tree for $G$ obtained by the length reduction order, and suppose $G_M\in \calD_{\IJ}$ is a full-dimensional leaf. 
By Lemma~\ref{lem.IJtreeToAltGraphBijection}, $G_M$ corresponds to the $(\IJ)$-tree $T=\mathrm{prec}^{-1}(G_M)$.

For every arc of the form $(i,\overline{j})$ in $T$, the route $(s, i, \mathrm{prec}(\overline{j}), t)$ is in the partially augmented graph $\hatG_M$.
Conversely, since $G_M$ is alternating, every route in $\hatG_M$ is of the form $(s,i,\mathrm{prec}(\overline{j}),t)$ with $i\leq \mathrm{prec}(\overline{j})$, and $(i,\overline{j})$ is an arc in $T$. 
Therefore, there is a bijection between the arcs in $T$ and the routes in $\hatG_M$.

The integral equivalence $\varphi_1$ takes the vertex corresponding to the route $(s,i,\mathrm{prec}(\overline{j}),t)$ in $\widehat{G}_M$ to the vertex $(\be_i,\be_{\overline{j}} )$ corresponding to the arc $(i,\overline{j})$ in $T$, and therefore, sends the facet of $\mathscr{T}_{len}$ encoded by $\hatG_M$ to the facet of $\mathscr{C}_{\IJ}$ encoded by $T$.
\end{proof}

\begin{example}
Again consider the valid pair $I=\{1,2,3,5,9 \}$, $\overline{J}=\{\overline{2}, \overline{7}, \overline{8}, \overline{9}\}$ and let $G=\GIJ$.
$M=x_{12}x_{19}x_{38}x_{39}x_{58}$ is a highest degree monomial in the reduced form of $M_G$ obtained by the length reduction order, and its corresponding alternating non-crossing tree $G_M$ is pictured in Figure~\ref{fig:running}.
Under the integral equivalence $\varphi_1$, the eight routes in $\hatG_M$ correspond to the eight arcs in the $(\IJ)$-tree $T$, and respectively they encode the vertices in a facet of the triangulation $\mathscr{T}_{len}$ of $\calF_{\hatG}$, and the vertices in a facet of the $(\IJ)$-Tamari complex $\mathscr{C}_{\IJ}$.
\end{example}

\begin{remark}
In fact, the set of covering $(\IJ)$-forests is in bijection with the set of graphs in the leaves of a reduction tree $R_G$ obtained by any reduction order in which the longest pair of edges is reduced at every reduction.  
The benefit of the $(\IJ)$-Tamari complex viewpoint is that the set of covering $(\IJ)$-forests is easily characterizable.
The covering $(\IJ)$-forests (respectively graphs in the leaves of $R_G$) index the interior faces of $\mathscr{C}_{\IJ}$ (respectively interior faces of the triangulation $\mathscr{T}_{len}$).
\end{remark}


From an $(\IJ)$ pair we obtain a lattice path $\overline{\nu}$ by reading the elements in increasing order, associating elements in $I$ with $E$ steps and elements in $\overline{J}$ with $N$ steps. 
Deleting the initial $E$ step and terminal $N$ step from $\overline{\nu}$ then gives a lattice path $\nu$.  
For example, the pair $I=\{1,2,3,5,9\}$, $\overline{J}=\{\overline{2}, \overline{7}, \overline{8}, \overline{9}\}$ gives rise to $\nu = ENEENNE$.

In general, the same $\nu$ can be obtained from different pairs $(\IJ)$. However, the $(\IJ)$ pairs giving rise to the same $\nu$ all induce combinatorially equivalent $(\IJ)$-Tamari complexes. 
For this reason, the $(\IJ)$-Tamari complex can simply be referred to as the $\nu$-Tamari complex $\calC_\nu$.

Several enumerative properties of the $\nu$-Tamari complex can be easily described using lattice paths. 
A {\em $\nu$-Dyck path} is a lattice paths using steps $E=(1,0)$ and $N=(0,1)$ staying weakly above the path $\nu$ that shares its end points.
The number of $\nu$-Dyck paths is the {\em $\nu$-Catalan number} $\Cat(\nu)$. 
The {\em $\nu$-Narayana number} $\Nar_\nu(i)$ is the number of $\nu$-Dyck paths with $i$ valleys (consecutive $EN$ pairs).
Similar to a $\nu$-Dyck path, a {\em $\nu$-Schr\"oder path} is a lattice path using steps $E=(1,0)$, $N=(0,1)$, and $D=(1,1)$ staying weakly above the path $\nu$ that shares its end points. The number of $\nu$-Schr\"oder paths with $i$ diagonal steps is the $\nu$-Schr\"oder number $\mathrm{Sch}_\nu(i)$. 

Ceballos, Padrol, and Sarmiento~\cite{CPS19} showed that the number of facets in $\calC_\nu$ (and the volume of $\calU_{\IJ}$) is given by $\Cat(\nu)$.
They further showed that $\calC_\nu$ generalizes the boundary complex of the simplicial associahedron, and that it is be shellable with its $h$-vector given by $\nu$-Narayana numbers. 

Define the {\em $\nu$-Narayana polynomial} to be $N_\nu (x) = \sum_{i\geq 0} \Nar_{\nu} (i)x^i$ and the {\em $\nu$-Schr\"oder polynomial} to be $S_\nu (x) = \sum_{i\geq 0} \mathrm{Sch}_{\nu} (i)$.
We have the following corollary of Theorem~\ref{thm.IJrealization}.

\begin{corollary}
Let $p$ be any reduced form of $M_{\GIJ}$. 
Then $p(x_{ij} = 1,\beta)$ is the $\nu$-Schr\"oder polynomial $S_\nu(\beta)$, whose coefficients $\mathrm{Sch}_\nu(i)$ count the inner $i$-faces of $\calC_\nu$. In addition, the $h$-polynomial of $\calC_\nu$ is given by $\rho(x_{ij}=1, \beta-1)$, which is the $\nu$-Narayana polynomial $N_\nu(\beta)$.  
\end{corollary}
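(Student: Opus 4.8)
The plan is to read off both specializations of $p$ directly from the leaves of a length reduction tree, and then translate — via Theorem~\ref{thm.IJrealization} — into the face combinatorics of $\calC_\nu$. By Proposition~\ref{lem:anyorder} we may take $p$ to be the common reduced form of $M_{\GIJ}$ produced by any longest-pair reduction order, so that the monomials of $p$ correspond bijectively to the leaves of the associated reduction tree $R_G$. A monomial of $p$ has the form $\beta^{a}\prod_{e\in E(H)}x_{e}$ where $H$ is the leaf graph; since each $\beta$-producing reduction lowers the $x$-degree by one, $|E(H)| = |E(G)| - a$. Setting all $x_{ij}=1$ therefore gives
$$p(x_{ij}=1,\beta) = \sum_{H}\beta^{a(H)},$$
which records the number of leaves of each $\beta$-degree.

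\emph{Leaves versus interior faces.} By the Remark following Theorem~\ref{thm.IJrealization}, together with Lemmas~\ref{lem.IJtreeToAltGraphBijection} and~\ref{lem.innerFaces}, the leaf graphs of $R_G$ are in bijection with the covering $(\IJ)$-forests, hence with the interior faces of the $(\IJ)$-associahedral triangulation $\mathscr{A}_{\IJ}$ of $\UIJ$, i.e.\ of $\calC_\nu$. Under the inverse bijection (extending the one of Lemma~\ref{lem.IJtreeToAltGraphBijection}), a leaf graph $H$ lifts to a covering forest which, by Corollary~\ref{lem.conepoints2}, contains all $|I\cap\mathrm{prec}(\overline{J})|$ cone-point arcs $(\be_{i},\be_{\overline{j}})$ with $\mathrm{prec}(\overline{j})=i$; these are precisely the arcs that $\mathrm{prec}$ contracts, so the lifted forest has $|E(H)| + |I\cap\mathrm{prec}(\overline{J})|$ arcs. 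As an $(\IJ)$-tree has $|I|+|\overline{J}|-1$ arcs and $|E(G)| = |I|+|\overline{J}|-1-|I\cap\mathrm{prec}(\overline{J})|$, the interior face corresponding to $H$ has codimension $a(H)$ in $\calC_\nu$. Hence
$$p(x_{ij}=1,\beta) = \sum_{a\geq0} f^{\circ}_{a}(\calC_\nu)\,\beta^{a},$$
where $f^{\circ}_{a}(\calC_\nu)$ is the number of interior faces of $\calC_\nu$ of codimension $a$.

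\emph{Identification with $S_\nu$ and $N_\nu$.} Because $\mathscr{A}_{\IJ}$ is a shellable triangulation of the ball $\UIJ$, the interior face numbers are controlled by the $h$-vector through the reciprocity for triangulated balls (the $h$-vector of the relative complex $(\calC_\nu,\partial\calC_\nu)$ is the reversal of the $h$-vector of $\calC_\nu$), which unwinds to $\sum_{a}f^{\circ}_{a}(\calC_\nu)\,t^{a} = h_{\calC_\nu}(t+1)$. Since Ceballos, Padrol and Sarmiento~\cite{CPS19} proved $h_{\calC_\nu}(t)=N_\nu(t)$, we obtain
$$p(x_{ij}=1,\beta) = N_\nu(\beta+1) = \sum_{k}\Nar_\nu(k)(1+\beta)^{k} = \sum_{a}\Bigl(\sum_{k}\binom{k}{a}\Nar_\nu(k)\Bigr)\beta^{a}.$$
A $\nu$-Schr\"oder path with $a$ diagonal steps is obtained from a $\nu$-Dyck path with $k\geq a$ valleys by choosing $a$ of its valleys to replace by $D$-steps, so $\mathrm{Sch}_\nu(a) = \sum_{k}\binom{k}{a}\Nar_\nu(k)$; therefore $p(x_{ij}=1,\beta) = S_\nu(\beta)$, and comparing coefficients yields $\mathrm{Sch}_\nu(a) = f^{\circ}_{a}(\calC_\nu)$, the number of interior codimension-$a$ faces. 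Finally, substituting $\beta-1$ for $\beta$ gives $p(x_{ij}=1,\beta-1) = N_\nu(\beta) = h_{\calC_\nu}(\beta)$, the $h$-polynomial of $\calC_\nu$.

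\emph{Main obstacle.} The one genuinely combinatorial point is the codimension count above: verifying that the inverse of $\mathrm{prec}$ reinserts exactly the $|I\cap\mathrm{prec}(\overline{J})|$ cone-point arcs, irrespective of which inner vertices of $\GIJ$ have been deleted in passing to the leaf $H$, so that ``$a$ factors of $\beta$'' matches ``codimension $a$'' on the nose. Everything else is imported: the $h$-vector of $\calC_\nu$ from~\cite{CPS19}, and the $h$-vector/interior-face reciprocity for triangulated balls, which applies since $\UIJ$ is a polytope. One can add that, because the $h$-polynomial of a unimodular triangulation of a $0/1$-polytope coincides with that polytope's $h^{*}$-polynomial, $p(x_{ij}=1,\beta)$ in fact does not depend on the reduction order, which is what licenses the statement for \emph{any} reduced form of $M_{\GIJ}$.
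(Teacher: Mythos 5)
Your proof is correct, but it takes a genuinely different route from the paper's. The paper dispatches both claims by citation: M\'esz\'aros's Theorem~8 of \cite{Mes15} already establishes, for an \emph{arbitrary} reduced form $p$, that $p(x_{ij}=1,\beta-1)$ is the $h$-polynomial of the associated triangulation of the flow polytope, and \cite[Proposition~2.3]{BY21} is cited for the identity $N_\nu(\beta+1)=S_\nu(\beta)$; once the $h$-vector of $\calC_\nu$ is taken from \cite{CPS19}, the corollary is immediate. You instead re-derive both ingredients from scratch: you read the interior faces off the leaves of a length-order reduction tree via the bijection with covering $(\IJ)$-forests, verify by a careful edge count that $a$ factors of $\beta$ in a monomial corresponds exactly to codimension $a$, then invoke interior-face/$h$-vector reciprocity for triangulated balls to turn $\sum_a f^\circ_a\,\beta^a$ into $h(\beta+1)$, and prove $\mathrm{Sch}_\nu(a)=\sum_k\binom{k}{a}\Nar_\nu(k)$ combinatorially by marking valleys. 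Your closing remark is the right patch for ``any reduced form'': the leaf-to-covering-forest correspondence is specific to longest-pair orders, but unimodularity of flow-polytope triangulations forces $p(x_{ij}=1,\beta-1)$ to equal the $h^{*}$-polynomial of $\calF_{\hatG}$ regardless of reduction order, so the specialization is independent of $p$. What the paper's route buys is brevity and the ``any reduced form'' generality in one stroke from M\'esz\'aros's theorem; what your route buys is a self-contained combinatorial explanation of \emph{why} the $\beta$-degree tracks interior-face codimension and why the Narayana-to-Schr\"oder shift appears. One minor point: you correctly show that the coefficient of $\beta^a$ counts interior faces of \emph{codimension} $a$; the corollary's phrase ``inner $i$-faces'' should be read that way (equivalently, as $(d-i)$-dimensional interior faces, $d=\dim\calC_\nu$), consistent with $\mathrm{Sch}_\nu(0)=\Cat(\nu)$ counting facets.
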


\begin{proof}
M\'esz\'aros \cite[Theorem 8]{Mes15} showed that the $h$-polynomial of a triangulated flow polytope $\calF_{\widetilde{G}}$ can be obtained from any reduced form $p$ of $M_G$ as a polynomial in $\beta$. 
In particular the $h$-polynomial is the polynomial $p(x_{ij}=1,\beta-1)$, obtained by first substituting $x_{ij} = 1$ for all $x_{ij}$ in $p$, and then computing $p(\beta -1)$. 
Since we know from \cite[Theorem 4.6]{CPS15} that the $h$-vector of the $\nu$-Tamari complex is given by the $\nu$-Narayana numbers, the $h$-polynomial is $N_\nu(x)$.  
The coefficient of $\beta^i$ in the shifted $h$-polynomial $p(x_{ij}=1, \beta)$ counts the number of inner $i$-faces $\calC_{\IJ}$. 
It was shown in \cite[Proposition 2.3]{BY21} that $p(x_{ij}=1, \beta) = N_\nu(x+1)$ is the $\nu$-Schr\"oder polynomial $S_\nu(x)$.
\end{proof}

In the preprint~\cite{Bel22}, the first author considers triangulating flow polytopes of graphs similar to $\widehat{G}(I,\overline{J})$ where some edges are bi-directional. 
As a consequence, reduced forms of certain polynomials in a generalized subdivision algebra are shown to encode triangulations of a product of two simplices. A particular reduction order analogous to the length reduction order gives rise to a triangulation dual to the $(\IJ)$-cyclohedron of \cite{CPS19}.

\DeclareRobustCommand{\VON}[3]{#3}
\bibliographystyle{plain}
\bibliography{nuTamari}

\end{document}